\documentclass[12pt, reqno, a4paper]{amsart}
\usepackage[margin=1.2in]{geometry}
\numberwithin{equation}{section}
\usepackage{amssymb,amsfonts,amsthm}
\usepackage[utf8]{inputenc}
\usepackage{listings}
\usepackage{bm}
\usepackage[hyperpageref]{backref}
\usepackage{esint}
\usepackage{color}
\usepackage[bookmarks]{hyperref}
\usepackage{siunitx}
\usepackage{bigints}
\usepackage{hyperref}
\usepackage{mathtools}
\usepackage{tikz}

\allowdisplaybreaks[4]

\newtheoremstyle{myremark}{10pt}{10pt}{}{}{\bfseries}{.}{.5em}{}

 \newtheorem{thm}{Theorem}
 \newtheorem{cor}{Corollary}[section]
 \newtheorem{lemma}{Lemma}[section]
 \newtheorem{prop}[lemma]{Proposition}
 \theoremstyle{definition}
 \newtheorem{defn}{Definition}

\usepackage{hyperref}

 \newcommand{\norm}[1]{\left\Vert#1\right\Vert}

\allowdisplaybreaks[4]

\begin{document}

\title[Trudinger-Moser type inequality]{Trudinger-Moser type inequality in fractional Sobolev space with singularity on smooth submanifold}

\author{VIVEK SAHU}

\address{ Theoretical Statistics and Mathematics Unit,
Indian Statistical Institute, Delhi Centre, S.J. Sansanwal Marg, New Delhi, Delhi 110016, India}

\email{vivek@isid.ac.in, viiveksahu@gmail.com}

\subjclass[2020]{ 46E35 (Primary); 26D15 (Secondary)}

\keywords{fractional Hardy inequality; Trudinger-type inequality; critical case.}

\date{}

\dedicatory{}

\begin{abstract}
We prove a Trudinger--Moser type inequality in fractional Sobolev spaces with singularities on smooth compact sets of codimension $k$, where $1 < k < d$ and $sp = d$. The singular term is given by the inverse $d$-th power of the distance to the submanifold. The proof is based on a fractional Hardy inequality adapted to smooth submanifolds, and we show the sharpness of the constant. We also establish the equivalence of two natural fractional Sobolev spaces vanishing on the singular set.
\end{abstract}

\maketitle


\section{Introduction}
The Trudinger inequality, introduced in $1967$ by Trudinger \cite{trudinger1967}, is a well-known result in the study of Sobolev spaces. It shows that functions in $W^{1,d}(\Omega)$ can grow exponentially and still be integrable. This extends the usual Sobolev embedding results to Orlicz spaces. Later, in $1971$, J. Moser \cite{moser1970} improved this result by giving a sharper version, now known as the Trudinger-Moser inequality. This result also provides the best constant. It says that for a bounded domain $\Omega \subset \mathbb{R}^{d}$,
\begin{equation*}
    \sup_{ u \in W^{1,d}_{0}(\Omega), \,  \| \nabla u \|_{L^{d}(\Omega)} \leq 1 }  \int_{\Omega} \exp \left( \alpha |u|^{\frac{d}{d-1}} \right) \, dx < \infty, \hspace{3mm} \forall \ 0 \leq \alpha \leq \alpha_{d},
\end{equation*}
where $\alpha_{d} = d (d \omega_{d})^{\frac{1}{d-1}}$ and $\omega_d$ denotes the volume of the unit ball in $\mathbb{R}^{d}$. If $\alpha> \alpha_{d}$, the integral become infinite, making $\alpha_{d}$ the best possible constant. For further development in Trudinger-Moser inequality, we refer to \cite{Adachi2000, Adi2007, Adi2010, Ruf2015, pro2021, Nguyen2013}. 

\smallskip

In this article, we aim to extend our previous result \cite{Adi2025} and establish Trudinger-Moser type inequality in fractional Sobolev space with singularity on smooth compact set $K$ of codimension $k$, where $k \in \mathbb{N}$ and $1<k<d$. In 2019, E. Parini and B. Ruf \cite{ruf2019} extended the Trudinger-Moser inequality to fractional Sobolev spaces in bounded Lipschitz domains for $d \geq 2$. They proved that for a bounded Lipschitz domain $\Omega$ in $\mathbb{R}^{d}$ with $d \geq 2$, and $sp = d$, there exists a constant $\alpha_* = \alpha_*(d,s, \Omega) > 0$ such that for all $\alpha \in [0, \alpha_*)$,
\begin{equation}
    \sup_{u \in \widetilde{W}^{s,p}_0(\Omega), \, [u]_{W^{s,p}(\mathbb{R}^d)} \leq 1} \int_\Omega \exp\left(\alpha |u(x)|^{\frac{d}{d-s}}\right) \, dx < \infty.
\end{equation}
Moreover, the above inequality fails for any $\alpha \in (\alpha^{*}_{s,d}, \infty)$, where
\begin{equation}\label{Defn alpha star d}
\alpha^{*}_{s,d} :=    d  \left(  \frac{2 (d \omega_{d})^{2} \Gamma (p+1) }{d!}  \sum_{n=0}^{\infty} \frac{(d+n-1)!}{n!} \frac{1}{(d+2n)^{p}} \right)^{\frac{s}{d-s}}.
\end{equation}
Here, $\widetilde{W}^{s,p}_0(\Omega)$ is the completion of $C^\infty_c(\Omega)$ under the norm $(\|\cdot\|_{L^{p}(\Omega)} + [\cdot]_{W^{s,p}(\mathbb{R}^d)})^{\frac{1}{p}}$, where $[\cdot]_{W^{s,p}(\mathbb{R}^{d})}$ is Gagliardo seminorm on $\mathbb{R}^{d}$. Determining the optimality of $\alpha^{*}_{s,d}$ remains an interesting open problem in the literature. S. Iula, in \cite{Iula2017}, investigated the one-dimensional case ($d = 1$) and established the Trudinger-Moser inequality within the framework of fractional Sobolev spaces in dimension one.

\smallskip

N. V. Thin, in \cite{Thin2020}, extended the above result to the singular Trudinger–Moser inequality with the weight $\frac{1}{|x|^{\gamma}}$, where $0 < \gamma < d$ in $\mathbb{R}^{d}$. He proved that for $0 \leq \gamma < d$ and $sp = d$, there exists $\beta_{*} > 0$ such that for any $0 \leq \alpha \leq \beta_{*} < \alpha_{*}$, the following inequality holds:
\begin{equation*}
    \sup_{u \in W^{s,p}(\mathbb{R}^{d}), ~ \| u \|_{W^{s,p}(\mathbb{R}^{d})} \leq 1} \int_{\mathbb{R}^{d}} \frac{\Phi_{d,s} \left( \alpha |u(x)|^{\frac{d}{d-s}} \right) }{|x|^{\gamma}} \, dx <  \infty, 
\end{equation*}
where
\begin{equation}\label{Defn: Phi}
    \Phi_{d,s} (t):= \exp(t) - \sum_{i=0}^{j_{p}-2} \frac{t^{i}}{i !}, \hspace{0.5cm} j_{p} = \min \left\{ j \in \mathbb{N} : j \geq p \right\}.
\end{equation}

The author, together with Adimurthi and P. Roy in \cite{Adi2025}, established a weighted Trudinger type inequality in the context of fractional Hardy inequality with boundary singularities. In \cite{Adi2025}, it is proved that for a bounded Lipschitz domain $\Omega \subset \mathbb{R}^{d}$, where $d \geq 2$ with $sp = d$, there exists $\alpha_{*} > 0$ such that for any $0 \leq \alpha < \alpha_{*}$, the following inequality holds:
\begin{equation*}
    \sup_{u \in W^{s,p}_{0}(\Omega), ~ [u]_{W^{s,p}(\Omega)} \leq 1} \int_{\Omega} \Phi_{d,s} \left( \alpha |u(x)|^{\frac{d}{d-s}} \right) \frac{dx}{\delta^{d}_{\Omega}(x)} \, dx  <  \infty, 
\end{equation*}
where $\delta_{\Omega}(x) := \min_{y \in \partial \Omega} |x - y|$ and $\Phi_{d,s}$ is defined in \eqref{Defn: Phi}. Furthermore, for the case $d = 1$, the same article \cite{Adi2025} establishes an appropriate weighted Trudinger-type inequality with a suitable logarithmic weight function. In short, the following inequality holds for any $0 \leq \alpha < \alpha_{*}$:
\begin{equation*}
  \sup_{u \in W^{s,p}(\Omega), ~ [u]_{W^{s,p}(\Omega)} \leq 1} \int_{\Omega} \Phi_{d,s} \left( \alpha \left( \frac{|u(x)-(u)_{\Omega}|}{\ln \left( \frac{2R}{\delta_{\Omega}(x)} \right) } \right)^{\frac{1}{1-s}} \right) \frac{dx}{\delta_{\Omega}(x)} \, dx  <  \infty,   
\end{equation*}
where $(u)_{\Omega}$ is the average of $u$ over $\Omega$. Furthermore, it was established that for any $\alpha> \alpha^{*}_{s,d}$, the above two inequalities fails. The critical case $sp=d$ with $d=1$ is motivated by the work of Adimurthi, P. Jana, and P. Roy \cite{AdiPurPro2026}, where the authors established a fractional Hardy inequality with an optimal logarithmic weight function. Using this, the above Trudinger-type inequality was established for the one-dimensional case.

\smallskip

This article extends the above Trudinger-type inequality in the context of fractional Hardy inequality with a boundary singularity (see \cite{Adi2025}) to the case where the singularity lies on a smooth submanifold of codimension $k$, with $k \in \mathbb{N}$ and $1 < k < d$. In \cite{adimurthiSubmanifold}, the author, along with Adimurthi and P. Roy, proved fractional Hardy inequalities with singularities on smooth submanifolds. In more recent work with Kijaczko \cite{kijaczko2025} (see also \cite{Vivek2025}), we proved several fractional Hardy inequalities with singularities on flat submanifolds  with an optimal constant. Using the definition of a smooth submanifold from \cite{adimurthiSubmanifold}, we now establish Trudinger-Moser type inequality with singularities on such submanifolds. Let $\Omega$ be a bounded open set in $\mathbb{R}^{d}$, and let $K \subset \Omega$ be a compact set. For any $x \in \Omega$, we define
\begin{equation}
\delta_{K}(x) := \inf_{y \in K} |x-y|.
\end{equation}

We adopt the following definition of a smooth compact set of codimension $k$, where $k \in \mathbb{N}$ and $1 < k < d$:

\begin{defn}\label{definition}
    For any $k \in \mathbb{N}$ and $1<k< d$, let $K$ be a compact surface of codimension $k$ in $\mathbb{R}^{d}$,  we say that $K$  is a class of $C^{0,1}$ if there exists $C>0$ such that for any $x \in K$ there exists a ball $B=B_{r}(x) \subset \mathbb{R}^{d}, ~ r >0$ , and a isomorphism $T: Q \to B$ such that
    \begin{equation*}
        T \in C^{0,1}(\overline{Q}), \hspace{3mm} T^{-1} \in C^{0,1}(\overline{B}) \hspace{3mm} \text{and} \hspace{3mm} \norm{T}_{C^{0,1}(\overline{Q})} + \norm{T^{-1}}_{C^{0,1}(\overline{B})} \leq C,
    \end{equation*}
    where  $Q= B^{k}_{1}(0) \times (0,1)^{d-k}, ~ B^{k}_{1}(0) \subset \mathbb{R}^{k}$ is a unit ball with center  $0$ (see Figure \ref{fig:myfigure1}). For any  $\xi \in Q= B^{k}_{1}(0) \times (0,1)^{d-k}$, we denote as  $\xi= (\xi_{k}, \xi_{d-k})$, where  $\xi_{k} \in B^{k}_{1}(0)$ and  $\xi_{d-k} \in (0,1)^{d-k}$. Also,  $T^{-1}(B \cap K) =  \{ (\xi_{k}, \xi_{d-k}) \in Q : \xi_{k} =0 \}$. We also assume that for any  $y \in (\Omega \setminus K) \cap B$, we have
    \begin{equation*}
        \delta_{K}(y) \sim |\xi_{k}|,
    \end{equation*}
   i.e., $C_{1} |\xi_{k}| \leq \delta_{K}(y) \leq C_{2} |\xi_{k}|$ for some $C_{1}, ~ C_{2}>0$ very close to $1$, where $T((\xi_{k}, \xi_{d-k})) = y$. \\
\begin{figure}[h!]
\begin{center}
\begin{tikzpicture}
\node at (-1,.5) {$B^{k}_{1}(0)$};
\node at (1.5,1.5) {$(0,1)^{d-k}$};

\draw[dashed] (0,1) -- (3,1);
\draw[dashed] (0,-1) -- (3,-1);

\draw (0,0) ellipse [y radius=1, x radius=0.5];
\draw (3,0) ellipse [y radius=1, x radius=0.5];
\fill[blue!20, opacity=0.5] (0,0) ellipse [y radius=1, x radius=0.5];
\fill[blue!20, opacity=0.5] (3,0) ellipse [y radius=1, x radius=0.5];

\draw[->] (-2,0) -- (4,0) node[right] {$\mathbb{R}^{d-k}$};
\draw[->] (0,-2) -- (0,2) node[above] {$\mathbb{R}^{k}$};
\end{tikzpicture}
\end{center}
\caption{$Q= B^{k}_{1}(0) \times (0,1)^{d-k}$.}
\label{fig:myfigure1}
\end{figure}
\end{defn}

In the above definition, the assumption $\delta_{K}(y) \sim |\xi_k|$ is not really an assumption. In Section \ref{Geometry of Compact Lipschitz Submanifolds}, we will further explain smooth compact sets of codimension $k$, in a manner similar to the boundary of a bounded Lipschitz domain. In that section, we will establish that $\delta_{K}(y) \sim |\xi_k|$ locally, in terms of a Lipschitz function. Therefore, we mention it in the definition because this relationship is an important component in establishing the main result of this article.

\smallskip

Consider an open set $\Omega \subset \mathbb{R}^{d}$, and let $K$ be a compact subset of $\Omega$. Let $p>1$ and $s \in (0,1)$. We define the space $W^{s,p}_{0}(\Omega \setminus K)$ as the closure of $C^{\infty}_{c} (\Omega \setminus K)$ functions with respect to the norm $\norm{\cdot}_{W^{s,p}(\Omega)}$ (see \eqref{norm defn} for the definition of $\norm{\cdot}_{W^{s,p}(\Omega)}$). For every $u\in W^{s,p}_{0}(\Omega\setminus K)$, we denote by
$\widetilde{u}:\mathbb{R}^{d}\to\mathbb{R}$ its zero extension, defined by
\begin{equation}\label{Extension Function}
   \widetilde{u}(x)=
\begin{cases}
u(x), & x\in\Omega\setminus K,\\[1mm]
0, & x\in (\mathbb{R}^{d}\setminus\Omega) \cup K.
\end{cases} 
\end{equation}
We then define
\begin{equation}\label{Defn: Extension Space}
  \widetilde{W}^{s,p}_{0}(\Omega\setminus K)
:=
\left\{
\widetilde{u}:\;
u\in W^{s,p}_{0}(\Omega\setminus K)
\right\}.  
\end{equation}

\smallskip

The following theorem is the main result of this article. We establish a suitable singular Trudinger--Moser type inequality in the fractional Sobolev space, where the singular term in the integrand is given by $\frac{1}{\delta^{d}_{K}(x)}$, with $K$ being a smooth, compact set of lower dimension.

\begin{thm}\label{Th : Trud-Moser inequ}
Let $\Omega$ be a bounded Lipschitz domain in $\mathbb{R}^{d}$, and let $K \subset \Omega$ be a smooth compact set of class $C^{0,1}$ of codimension $k$, where $k \in \mathbb{N}$ and $1<k<d$. Let $p>1$ and $s \in (0,1)$ be such that $sp=d$. Then there exists $\alpha_{*}=\alpha_{*}(d,s,k, \Omega, K)>0$ such that
 \begin{align}
   \sup_{u \in \widetilde{W}^{s,p}_{0}(\Omega \setminus K), \ [u]_{W^{s,p}(\mathbb{R}^{d})} \leq 1}  \int_{\Omega} \Phi_{d,s} \left( \alpha  |u(x)|^{\frac{d}{d-s}}  \right) \, \frac{dx}{\delta^{d}_{K}(x)}     < \infty , \quad   \forall \ \alpha \in [0, \alpha_{*}),
\end{align}
where $\Phi_{d,s}$ is as defined in \eqref{Defn: Phi}. Furthermore, the above inequality fails for any $\alpha> \alpha^{*}_{s,d}$, where $\alpha^{*}_{s,d}$ is defined in \eqref{Defn alpha star d}.
\end{thm}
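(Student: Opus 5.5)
We follow the strategy of \cite{Adi2025}: expand $\Phi_{d,s}$ in its power series and bound each term with a fractional Hardy-type inequality with singularity on $K$, keeping track of how the constants grow in the exponent. Write $p' = \frac{d}{d-s}$, so that $p' = \frac{p}{p-1}$ since $sp=d$. Then
\begin{equation*}
\int_{\Omega} \Phi_{d,s}\left(\alpha |u|^{p'}\right)\frac{dx}{\delta_K^d(x)} = \sum_{j\ge j_p-1} \frac{\alpha^j}{j!}\int_\Omega \frac{|u|^{p'j}}{\delta_K^d}\,dx .
\end{equation*}
Every exponent $q=p'j$ in this sum satisfies $q\ge p'(j_p-1)\ge p$; indeed $j_p-1\ge p-1$ gives $p'(j_p-1)\ge p'(p-1)=p$. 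So it suffices to prove the following key estimate. There is a constant $C=C(d,s,k,\Omega,K)$ such that for all $q\ge p$ and all $u\in\widetilde W^{s,p}_0(\Omega\setminus K)$,
\begin{equation}\label{proposal key}
\int_\Omega \frac{|u|^q}{\delta_K^d}\,dx \le C^q\, q^{q/p'}\,[u]^q_{W^{s,p}(\mathbb{R}^d)} .
\end{equation}
Granting \eqref{proposal key} with $[u]\le 1$ and $q=p'j$, the $j$-th term is at most $\frac{\alpha^j}{j!}C^{p'j}(p'j)^{j}$. By Stirling, $j^j/j!\le e^j$, so the $j$-th term is at most $(\alpha\, C^{p'} p' e)^j$. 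The series therefore converges uniformly in $u$ for $\alpha<\alpha_*:=(e\,p'\,C^{p'})^{-1}$.

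To prove \eqref{proposal key} we localize. Cover $K$ by finitely many balls $B_i$ as in Definition \ref{definition}, and split off $\Omega\setminus\bigcup_i B_i$, on which $\delta_K\ge c>0$. On that part the weight is bounded, and the estimate $\int_\Omega|u|^q\le C^q q^{q/p'}[u]^q$ is the standard critical-case Sobolev bound underlying Parini--Ruf \cite{ruf2019}. It follows from the embedding $\|u\|_{L^q}\lesssim q^{1/p'}[u]_{W^{s,p}(\mathbb{R}^d)}$, which holds for supported-in-$\Omega$ functions via Riesz-potential/Hedberg arguments.

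Near $K$ we use a dyadic decomposition in the distance: $A_m=\{x\in B_i: 2^{-m-1}<\delta_K(x)\le 2^{-m}\}$. On each $A_m$ the weight is $\sim 2^{md}$. Pulling back by $T$ and using $\delta_K\sim|\xi_k|$, the set $A_m$ becomes $\{|\xi_k|\sim 2^{-m}\}\times(0,1)^{d-k}$. We cover this by cubes $Q_{m,\ell}$ of side $\sim 2^{-m}$, roughly $2^{m(d-k)}$ of them, each at distance $\sim 2^{-m}$ from $K$. On each cube $Q$, which has $|Q|\sim 2^{-md}$ and $\delta_K\sim\ell(Q)$, scaling invariance in the critical case $sp=d$ gives
\begin{equation*}
\frac{1}{|Q|}\int_Q|u-u_Q|^q \le C^q q^{q/p'}[u]^q_{W^{s,p}(Q^*)},
\end{equation*}
where $Q^*$ is a slightly enlarged cube. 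This bound is scale free, so the weight $2^{md}$ exactly matches $|Q|^{-1}$. Summing over cubes uses $\sum_Q a_Q^{q/p}\le(\sum_Q a_Q)^{q/p}$, valid since $q\ge p$, together with bounded overlap of the $Q^*$. It remains to control the averages: $\sum_Q |u_Q|^q$ must be bounded by $C^q[u]^q$. This is where the fractional Hardy inequality with singularity on the submanifold \cite{adimurthiSubmanifold} enters, in its critical form $sp=d$ with $k<d$. It gives $\int |u|^p\delta_K^{-d}\lesssim [u]^p$, and hence $\sum_Q|u_Q|^p\lesssim[u]^p$ by Jensen. Since $q\ge p$, the $\ell^p\subset\ell^q$ inclusion then yields $\sum_Q|u_Q|^q\le(\sum_Q|u_Q|^p)^{q/p}$. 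The bi-Lipschitz maps $T$ distort Gagliardo seminorms and distances only by factors depending on the constant $C$ of Definition \ref{definition}, so the constants stay uniform.

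The main obstacle is this Hardy step. For $1<k<d$ and $sp=d$, one has $sp=d>k$, so no exceptional case arises; the hypothesis $1<k<d$ is exactly what makes the Hardy inequality of \cite{adimurthiSubmanifold} available. The point to check carefully is that it holds with the full seminorm $[u]_{W^{s,p}(\mathbb{R}^d)}$ for zero extensions in $\widetilde W^{s,p}_0(\Omega\setminus K)$, with constants depending only on $(d,s,k,\Omega,K)$.

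For the failure when $\alpha>\alpha^*_{s,d}$, we pick an interior point $x_0\in\Omega\setminus K$ with a ball $B_{2r}(x_0)\subset\Omega\setminus K$. There $\delta_K\sim 1$, so the weighted integral dominates a constant multiple of the unweighted one. We then take the Parini--Ruf Moser-type sequence concentrated at $x_0$: these are logarithmic truncations, which lie in $C_c(B_r(x_0))$ after normalization. They satisfy $[u_n]\le1$ and $\int\Phi_{d,s}(\alpha|u_n|^{p'})\to\infty$ for $\alpha>\alpha^*_{s,d}$. The $\alpha^*_{s,d}$ computation of \cite{ruf2019} transfers verbatim because it is local and translation invariant, which gives the claimed sharpness.
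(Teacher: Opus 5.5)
Your proof is correct, and its skeleton is the paper's. You expand $\Phi_{d,s}$ and bound each term by a Hardy inequality with growth $\tau^{\frac{d-s}{d}}$; your key estimate is a weaker form of Theorem \ref{Th: Fractional Hardy sp=k} raised to the power $\tau$. You prove that inequality by cutting a neighbourhood of $K$ into dyadic cells of size comparable to $\delta_{K}$, applying the scale-invariant Sobolev--Poincar\'e inequality of Lemma \ref{sobolev} on each cell, and summing with $\sum a_{i}^{\tau/p}\le(\sum a_{i})^{\tau/p}$. Sharpness then comes from the Parini--Ruf functions, exactly as in the paper.

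The genuine difference is how the cell averages are controlled.

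\emph{The paper.} Lemma \ref{Le : flat case sp=d} does this from scratch. It compares each cell $A^{j}_{\ell+1}$ with the $2^{d-k}$ cells below it, using Lemma \ref{avg} and Lemma \ref{estimate} with $c=c_{1}2^{d-k}$. This gives the recursion $\sum_{j}|(u)_{A^{j}_{\ell+1}}|^{p}\le c_{1}\sum_{i}|(u)_{A^{i}_{\ell}}|^{p}+C[u]^{p}_{W^{s,p}(A_{\ell}\cup A_{\ell+1})}$, where $c_{1}<1$ precisely because $k<d$. The recursion is started from layers where $u\equiv 0$. This chain argument is in effect a proof of the case $\tau=p$.

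\emph{Your route.} You take the case $\tau=p$ as a black box and deduce $\sum_{Q}|(u)_{Q}|^{p}\lesssim\int|u|^{p}\delta_{K}^{-d}\,dx\lesssim[u]^{p}$ by Jensen. This is more modular. It shows that the $p$-level Hardy inequality self-improves to the $\tau$-level one, with all $\tau$-dependence coming from the local Sobolev--Poincar\'e constant. Also, you bound each cell by the seminorm of $u$ itself and split only the domain of integration, not the function. That lets you bypass the partition of unity and the multiplier estimate of Lemma \ref{testfunc} used in the proof of Theorem \ref{Th: Fractional Hardy sp=k}.

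\emph{What the paper's route buys.} It is self-contained and does not depend on whether \cite{adimurthiSubmanifold} actually states the endpoint $sp=d$. It also yields the inequality with the smaller seminorm $[u]_{W^{s,p}(\Omega)}$. If the reference does not cover $sp=d$, the case $\tau=p$ of Theorem \ref{Th: Fractional Hardy sp=k} supplies exactly the input you need, without circularity, since it is a statement at the single exponent $p$.
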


We establish in Corollary \ref{corollary 1} that, for any $\alpha > 0$,
\begin{equation}
\int_{\Omega} \Phi_{d,s} \left( \alpha |u(x)|^{\frac{d}{d-s}} \right) \, \frac{dx}{\delta^{d}_{K}(x)} < \infty.
\end{equation}
Furthermore, the spaces $W^{s,p}_{0}(\Omega \setminus K)$ and $\widetilde{W}^{s,p}_{0}(\Omega \setminus K)$ are equivalent with respect to the fractional Sobolev norm. This equivalence is proved in Proposition \ref{Proposition 1}.

\smallskip

The next theorem is a key result used to establish the existence of $\alpha_{*}$ in Theorem \ref{Th : Trud-Moser inequ}. It presents a fractional Hardy inequality with singularity on a smooth compact set of codimension $k$, where $k \in \mathbb{N}$ and $1 < k < d$. It is stated as follows:

\begin{thm}\label{Th: Fractional Hardy sp=k}
     Let $\Omega$ be a bounded Lipschitz domain in $\mathbb{R}^{d}$, and let $K \subset \Omega$ be a smooth compact set of class $C^{0,1}$ of codimension $k$, where $k \in \mathbb{N}$ and $1<k<d$. Let $p>1$ and $s \in (0,1)$ be such that $sp=d$. Then for any $\tau \geq p$, there exists a constant $C=C(d,s,k, \Omega, K)>0$ such that
    \begin{equation}
       \left( \bigintssss_{\Omega} \frac{|u(x)|^{\tau}}{\delta^{d}_{K}(x)} \, dx \right)^{\frac{1}{\tau}} \leq C \tau^{\frac{d-s}{d}} \left( \int_{\Omega} \int_{\Omega}  \frac{|u(x)-u(y)|^{p}}{|x-y|^{d+sp}} \, dx \, dy \right)^{\frac{1}{p}} , \hspace{3mm} \forall \ u \in W^{s,p}_{0}(\Omega \setminus K).
    \end{equation}
\end{thm}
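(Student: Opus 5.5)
The plan is to localize near $K$ with the charts of Definition \ref{definition}, straighten $K$ to the flat set $\{\xi_k=0\}$, and on each flat piece prove a Hardy-type bound whose $\tau$-dependence is exactly $\tau^{(d-s)/d}$. That growth rate is the critical-case Sobolev/Trudinger rate: for $sp=d$ one has $\|u\|_{L^q}\lesssim q^{1-1/p}[u]$ for $q\ge p$, and $1-1/p = 1-s/d = (d-s)/d$.

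\textbf{Step 1: localization.} Since $K$ is compact, I cover it by finitely many balls $B_i=B_{r_i}(x_i)$ as in Definition \ref{definition}. I add one open set $U_0\Subset\Omega\setminus K$, taken with $\delta_K\ge c_0>0$ on it, so that $\Omega\subset U_0\cup\bigcup_i B_i$ once the $B_i$ are chosen to cover a neighbourhood of $K$.

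On $U_0$ the weight is bounded, so
\[
\int_{U_0}\frac{|u|^\tau}{\delta_K^d}\le c_0^{-d}\|u\|_{L^\tau(\Omega)}^\tau .
\]
I then use the fractional critical Sobolev embedding with sharp growth, $\|u\|_{L^\tau(\Omega)}\le C\tau^{(d-s)/d}\|u\|_{W^{s,p}(\Omega)}$. This follows, for instance, from the Trudinger--Moser inequality of Parini--Ruf by expanding the exponential, or from a Riesz potential / rearrangement argument. Since $u$ vanishes on $K$, a Poincaré inequality on $\Omega$ lets me replace the full norm by the Gagliardo seminorm. I would obtain that Poincaré inequality from the fractional Hardy inequality of \cite{adimurthiSubmanifold}, or by a compactness argument using the trace-like vanishing on $K$.

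\textbf{Step 2: flat model near $K$.} In a chart, set $v=u\circ T$ on $Q$. Using $\delta_K(T\xi)\sim|\xi_k|$ and the bi-Lipschitz bounds on $T$, the weighted integral and the Gagliardo seminorm on $B\cap\Omega$ become comparable to the corresponding quantities for $v$ on $Q$ with weight $|\xi_k|^{-d}$.

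The flat estimate I aim for is
\[
\int_Q\frac{|v|^\tau}{|\xi_k|^{d}}\,d\xi\le (C\tau^{(d-s)/d})^\tau [v]^{\tau}_{W^{s,p}(Q)}.
\]
I would prove it by a dyadic decomposition in $|\xi_k|$. Let $A_j=\{2^{-j-1}\le|\xi_k|<2^{-j}\}\times(0,1)^{d-k}$. I cover $A_j$ by cubes $Q_{j,m}$ of side $\sim2^{-j}$; there are about $2^{j(d-k)}\cdot O(1)$ of them, since the $\xi_k$-shell has bounded overlap count at scale $2^{-j}$. On each $Q_{j,m}$, scaling invariance of the critical seminorm ($sp=d$) together with the sharp local embedding gives
\[
\Big(\fint_{Q_{j,m}}|v-(v)_{Q_{j,m}}|^\tau\Big)^{1/\tau}\le C\tau^{(d-s)/d}[v]_{W^{s,p}(Q^*_{j,m})},
\]
where $Q^*_{j,m}$ is a slightly enlarged cube. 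The weight gives a factor $2^{jd}|Q_{j,m}|=O(1)$ per cube, so the oscillation part sums to
\[
\sum_{j,m}(C\tau^{(d-s)/d})^\tau[v]^\tau_{W^{s,p}(Q^*_{j,m})}\le (C\tau^{(d-s)/d})^\tau[v]^\tau,
\]
using $\tau\ge p$ (so $\ell^p\subset\ell^\tau$) and the bounded overlap of the $Q^*_{j,m}$.

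\textbf{Step 3: the averages (main obstacle).} It remains to control $\sum_{j,m}|(v)_{Q_{j,m}}|^\tau$. Here the vanishing of $u$ on $K$ is essential, and the codimension condition $k>1$ enters. I would connect $(v)_{Q_{j,m}}$ to the averages over cubes of the same $\xi_{d-k}$-position but at larger scales, telescoping up to $|\xi_k|\sim1$. At the top scale, Step 1 controls the averages.

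The telescoping differences $|(v)_{Q}-(v)_{Q'}|$ for adjacent scales are bounded by $C[v]_{W^{s,p}(Q\cup Q')}$ with no loss, again by the critical scaling. The difficulty is that a naive chain of length $j$ yields a factor $j$, and the weight sum $\sum_j$ then diverges logarithmically. This is exactly the borderline behind the critical Hardy inequality.

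To avoid it, I would instead use the alternative route: since $k\ge2$, one can approach $\{\xi_k=0\}$ from a whole sphere of directions in $\mathbb{R}^k$. For $v\in C_c^\infty$ away from $K$, I would use $v=0$ near $\xi_k=0$ and run the telescoping downward from the vanishing region, combined with a discrete Hardy inequality in $j$ with $\ell^p$ to $\ell^\tau$ summation. If this is delicate, the fallback is to invoke the known fractional Hardy inequality with singularity on the submanifold from \cite{adimurthiSubmanifold} at exponent $p$ (weight $\delta_K^{-sp}=\delta_K^{-d}$). I would then bootstrap to exponent $\tau$ by applying it to $|v|^{\tau/p}$-type truncations, tracking the constant through the local $L^\tau$ embedding above.

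Finally, I sum over the finitely many charts and take the $\tau$-th root to obtain the statement.
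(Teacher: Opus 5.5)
\textbf{There is a genuine gap in Step 3, the control of the averages.} Your Steps 1 and 2 are the paper's argument: a finite chart cover plus a region where $\delta_K$ is bounded below, flattening, cells of size $\sim 2^{\ell}$ in both $|\xi_k|$ and $\xi_{d-k}$, scaled Sobolev--Poincar\'e with constant $\tau^{(d-s)/d}$ on each cell, and $\ell^p\subset\ell^\tau$. Step 3 is the heart of the proof, and you leave it undone with an incorrect diagnosis.

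Telescoping down from the top scale $|\xi_k|\sim 1$ does not lose a logarithm; it loses a geometric factor. Level $j$ contains $\sim 2^{j(d-k)}$ cells, each with weight $2^{jd}|Q_{j,m}|\sim 1$. So the resulting upper bound contains $\sum_j 2^{j(d-k)}|(v)_{\mathrm{top}}|^p=\infty$, and no control of the top-scale averages from Step 1 can close that chain. The logarithmic borderline you describe is the case $sp=k$, i.e.\ $k=d$, which is excluded. Likewise, the hypothesis that makes the averages summable is $k<d$ (equivalently $sp>k$), not $k\ge 2$. In the cell argument, $k\ge 2$ at most makes the shells $\{2^{\ell}\le|\xi_k|<2^{\ell+1}\}$ connected, so that Lemma \ref{sobolev} applies on each cell.

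\textbf{The missing idea} is to telescope outward from the region where $v$ vanishes and to exploit the branching. In the paper's notation, each $A^j_{\ell+1}$ lies over $2^{d-k}$ cells $A^i_\ell$.
\begin{itemize}
\item Lemmas \ref{avg} and \ref{sobolev} give $|(v)_{A^i_\ell}-(v)_{A^j_{\ell+1}}|^p\le C[v]^p_{W^{s,p}(A^i_\ell\cup A^j_{\ell+1})}$.
\item Apply Lemma \ref{estimate} with $c=c_12^{d-k}$, $c_1=2/(1+2^{d-k})<1$. A choice with $1<c<2^{d-k}$ exists precisely because $d>k$.
\item Averaging over the $2^{d-k}$ children and summing over $j$ gives $S_{\ell+1}\le c_1S_\ell+C[v]^p_{W^{s,p}(A_\ell\cup A_{\ell+1})}$, where $S_\ell=\sum_i|(v)_{A^i_\ell}|^p$.
\item Since $S_m=0$ for $m$ very negative, summing in $\ell$ yields $\sum_\ell S_\ell\le C(1-c_1)^{-1}[v]^p_{W^{s,p}(Q)}$, with no top-scale input at all.
\item Finally $\sum_{\ell,i}|(v)_{A^i_\ell}|^\tau\le(\sum_\ell S_\ell)^{\tau/p}$.
\end{itemize}

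\textbf{Your fallback can be rescued, but not as written.} Applying the exponent-$p$ Hardy inequality to $|v|^{\tau/p}$ fails. When $sp=d$ we have $W^{s,p}\not\subset L^\infty$, so $[|v|^{\tau/p}]_{W^{s,p}}$ is not controlled by $[v]_{W^{s,p}}$. What works is Jensen: $|(v)_{Q_{j,m}}|^p\le|Q_{j,m}|^{-1}\int_{Q_{j,m}}|v|^p\lesssim\int_{Q_{j,m}}|v|^p|\xi_k|^{-d}$. Hence $\sum_{j,m}|(v)_{Q_{j,m}}|^p\le C[v]^p_{W^{s,p}(Q)}$ by that Hardy inequality, assuming it is available for $sp>k$ with the seminorm on the right, and then $\ell^p\subset\ell^\tau$ finishes. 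That route simply imports the $\tau=p$ estimate that the paper proves by hand through the contraction above.
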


The above theorem establishes a fractional Hardy inequality with singularity on a smooth compact set of codimension $ k$, where  $k \in \mathbb{N}$ and  $1 < k < d $. For further results on fractional Hardy inequalities with singularities on smooth submanifolds, we refer to our recent work with Adimurthi and P. Roy \cite{adimurthiSubmanifold}.  For recent developments in the theory of fractional Hardy inequalities, we refer to \cite{Adi2026, Bianchi2024, Brasco2018, dyda2004, Dyda2024-2, Dyda2024, frank2008} and the references therein.

\smallskip

This article is organised as follows. In Section~\ref{preliminaries}, we present a preliminary lemma that is already known in the literature and serves as a key tool in proving our main results. In Section~\ref{Geometry of Compact Lipschitz Submanifolds}, we study the geometry of smooth compact sets of codimension $k$ and describe their local structure. In Section~\ref{fractional hardy}, we establish a fractional Hardy inequality with singularity on a smooth submanifold in the critical case $sp = d$. Finally, in Section~\ref{Proof of Theorem 1}, we prove Theorem~\ref{Th : Trud-Moser inequ}, which provides a Trudinger--Moser type inequality in the fractional Sobolev space with singularity on a smooth compact set of lower dimension.



\section{Notation and Preliminaries}\label{preliminaries} 
In this section, we present the notation and preliminary lemmas that will be used to prove our main results.
Throughout this article, we use the following notation: for any bounded open set $\Omega$, 
\begin{equation*}
(u)_{\Omega} := \frac{1}{|\Omega|} \int_{\Omega} u(x) \, dx  = \fint_{\Omega} u(x) \, dx 
\end{equation*}
denotes the average value of $u$ over $\Omega$, where $|\Omega|$ is the Lebesgue measure of $\Omega$. For any $x \in \mathbb{R}^{d}$, we write $x=(x_{k}, x_{d-k})$, where $x_{k} \in \mathbb{R}^{k}$ and $x_{d-k} \in \mathbb{R}^{d-k}$.  One has, for any  $a_{1},  \dots , a_{m} \in \mathbb{R}$ and  $\gamma \geq1$,
\begin{equation}\label{sumineq}
    \sum_{\ell=1}^{m} |a_{\ell}|^{\gamma} \leq  \left( \sum_{\ell=1}^{m} |a_{\ell}| \right)^{\gamma} .
\end{equation}

\smallskip

For $p \in [1,\infty)$ and $s \in (0,1)$, the fractional Sobolev space $W^{s,p}(\Omega)$ is defined as
\begin{equation*}
W^{s,p}(\Omega) := \left\{    u \in L^{p}(\Omega) : \int_{\Omega} \int_{\Omega}  \frac{|u(x)-u(y)|^{p}}{|x-y|^{d+sp}} \, dx \, dy < \infty \right\},
\end{equation*}
with the norm
\begin{equation}\label{norm defn}
    \norm{u}_{W^{s,p}(\Omega)} := \left(  [u]^{p}_{W^{s,p}(\Omega)}  + \norm{u}^{p}_{L^{p}(\Omega)}  \right)^{\frac{1}{p}},
\end{equation}
where $[u]_{W^{s,p}(\Omega)}$ denotes the Gagliardo seminorm:
\begin{equation*}
[u]_{W^{s,p}(\Omega)} = \left( \int_{\Omega} \int_{\Omega} \frac{|u(x)-u(y)|^{p}}{|x-y|^{d+sp}} \, dx \, dy \right)^{\frac{1}{p}}.
\end{equation*}
We denote by $W^{s,p}_{0}(\Omega)$ the closure of $C^{\infty}_{c}(\Omega)$ with respect to the fractional Sobolev norm $ \| \cdot \|_{W^{s,p}(\Omega)}$.

\smallskip

When $sp = d$ with $d \geq 2$, and $\Omega$ is a bounded Lipschitz domain, the fractional Sobolev inequality states that for any $\tau \geq p$, there exists a constant $C = C(d,s,\Omega) > 0$ such that (see the Lemma $2.2$ in \cite{Adi2025})
    \begin{equation}\label{frac Sob ineq sp=d}
        \| u \|_{L^{\tau}(\Omega)} \leq C \tau^{\frac{d-s}{d}} [u]_{W^{s,p}(\Omega)}, \hspace{4mm} \forall \ u \in W^{s,p}_{0}(\Omega).
    \end{equation}

\smallskip

Now we state the fractional Sobolev inequality for the case $sp = d$ when the bounded Lipschitz domain is scaled by a parameter $\lambda > 0$, and a function is subtracted by its average. An important observation is that the constant in the inequality is independent of $\lambda$. This lemma is helpful in establishing Theorem \ref{Th: Fractional Hardy sp=k}.

\begin{lemma}\label{sobolev}
   Let $\Omega$ be a bounded Lipschitz domain in $\mathbb{R}^{d}$. Let $p > 1$ and $s \in (0,1)$ be such that $sp = d$. Define $\Omega_{\lambda}: = \left\{ \lambda x : ~ x \in \Omega \right\}$ for $\lambda>0$, then there exists a positive constant $C=C(d,s,\Omega) $ such that, for any $\tau \geq p$, we have
    \begin{equation}
         \left( \fint_{\Omega_{\lambda}} |u(x)-(u)_{\Omega_{\lambda}}|^{\tau } dx \right)^{\frac{1}{\tau}}  \leq C  \tau^{\frac{d-s}{d}} [u]_{W^{s,p}(\Omega_{\lambda})}, \hspace{3mm} \forall \ u \in W^{s,p}(\Omega_{\lambda}).
    \end{equation}
    
\end{lemma}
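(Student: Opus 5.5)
The plan is to reduce to $\lambda = 1$ by scaling, and then to prove the case $\lambda=1$ with a constant that grows like $\tau^{(d-s)/d}$.

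\emph{Scaling step.} Given $u \in W^{s,p}(\Omega_\lambda)$, set $v(x) := u(\lambda x)$ for $x \in \Omega$. The change of variables $x=\lambda x'$ gives
\begin{equation*}
\fint_{\Omega_\lambda} |u-(u)_{\Omega_\lambda}|^{\tau}\,dx = \fint_{\Omega} |v-(v)_{\Omega}|^{\tau}\,dx',
\end{equation*}
because averages are invariant under dilation. For the seminorm, the Jacobian factor is $\lambda^{2d}$ and the kernel contributes $\lambda^{-(d+sp)}$, so
\begin{equation*}
[u]^{p}_{W^{s,p}(\Omega_\lambda)} = \lambda^{2d-d-sp}[v]^{p}_{W^{s,p}(\Omega)} = \lambda^{d-sp}[v]^{p}_{W^{s,p}(\Omega)} = [v]^{p}_{W^{s,p}(\Omega)}.
\end{equation*}
Here we used $sp=d$ exactly; this is the reason the constant will not depend on $\lambda$.

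\emph{The case $\lambda=1$.} We need
\begin{equation*}
\|v-(v)_\Omega\|_{L^\tau(\Omega)} \le C\tau^{\frac{d-s}{d}}[v]_{W^{s,p}(\Omega)}
\end{equation*}
for all $v\in W^{s,p}(\Omega)$, whereas \eqref{frac Sob ineq sp=d} only covers $W^{s,p}_0(\Omega)$. I would proceed as follows.

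First, since $\Omega$ is a bounded Lipschitz domain, there is a bounded linear extension operator $E: W^{s,p}(\Omega)\to W^{s,p}(\mathbb{R}^d)$. Apply it to $w:=v-(v)_\Omega$.

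Second, control the $L^p$ part of $w$ by a fractional Poincaré inequality. By Jensen,
\begin{equation*}
\|w\|^p_{L^p(\Omega)} \le \frac{1}{|\Omega|}\int_\Omega\int_\Omega |v(x)-v(y)|^p\,dx\,dy \le \frac{(\operatorname{diam}\Omega)^{d+sp}}{|\Omega|}[v]^p_{W^{s,p}(\Omega)}.
\end{equation*}
Combined with boundedness of $E$, this gives $\|Ew\|_{W^{s,p}(\mathbb{R}^d)}\le C[v]_{W^{s,p}(\Omega)}$.

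Third, multiply by a cutoff $\eta\in C_c^\infty(B)$ with $\eta=1$ on $\Omega$, where $B$ is a ball containing $\overline{\Omega}$. Then $\eta Ew\in W^{s,p}_0(B)$ (possibly after a harmless mollification). The standard product estimate gives
\begin{equation*}
[\eta Ew]_{W^{s,p}(B)}\le C\|Ew\|_{W^{s,p}(\mathbb{R}^d)}.
\end{equation*}
It follows from splitting $\eta(x)f(x)-\eta(y)f(y) = \eta(x)\bigl(f(x)-f(y)\bigr) + f(y)\bigl(\eta(x)-\eta(y)\bigr)$ and using the Lipschitz bound on $\eta$.

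Finally, apply \eqref{frac Sob ineq sp=d} on the ball $B$ to $\eta Ew$. This gives
\begin{equation*}
\|w\|_{L^\tau(\Omega)}\le\|\eta Ew\|_{L^\tau(B)}\le C\tau^{\frac{d-s}{d}}[\eta Ew]_{W^{s,p}(B)}\le C\tau^{\frac{d-s}{d}}[v]_{W^{s,p}(\Omega)}.
\end{equation*}
Dividing by $|\Omega|^{1/\tau}$, which is bounded above and below uniformly in $\tau\ge p$ since $|\Omega|^{-1/\tau}\le\max(1,|\Omega|^{-1/p})$, gives the averaged form with $C=C(d,s,\Omega)$. Transporting back through the scaling step completes the proof.

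The main obstacle is the second step: the Sobolev inequality is only available for functions vanishing at the boundary, while $u$ is arbitrary. Bridging this needs both the extension operator on Lipschitz domains for fractional spaces and the Poincaré control of the $L^p$ norm. It is essential that every constant in this chain is independent of $\tau$, so that the $\tau^{(d-s)/d}$ growth enters only through \eqref{frac Sob ineq sp=d}.
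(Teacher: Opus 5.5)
Your proof is correct. The paper gives no argument of its own for this lemma; it only refers to \cite[Lemma 2.1]{Adi2025}. What you supply is therefore a self-contained reduction of the lemma to the zero-trace inequality \eqref{frac Sob ineq sp=d}, which the paper already quotes in its preliminaries.

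Compared with the bare citation, your route shows where each hypothesis enters. The dilation step shows that $sp=d$ is exactly the condition making both the mean oscillation and the Gagliardo seminorm scale invariant, which is why $C$ does not depend on $\lambda$. The Lipschitz regularity of $\Omega$ is used only once, at the single scale $\lambda=1$, through the extension theorem \cite[Theorem 5.4]{di2012hitchhikers}. The other steps are all $\tau$-independent: the Poincar\'e bound (constant $(\operatorname{diam}\Omega)^{2d}/|\Omega|$), the cutoff estimate \cite[Lemma 5.3]{di2012hitchhikers}, the mollification placing $\eta Ew$ in $W^{s,p}_0(B)$, and the bound $|\Omega|^{-1/\tau}\le\max(1,|\Omega|^{-1/p})$. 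Hence the factor $\tau^{\frac{d-s}{d}}$ comes solely from \eqref{frac Sob ineq sp=d} on the fixed ball $B$. The cost is importing the extension theorem, which the citation hides.

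One caveat concerns the dimension. The paper states \eqref{frac Sob ineq sp=d} only for $d\ge 2$, and it genuinely fails for $d=1$. There $sp=1$, so $C^\infty_c(\Omega)$ is dense in $W^{s,p}(\Omega)$, and constants lie in $W^{s,p}_0(\Omega)$ with zero seminorm. As written, your argument proves the lemma only for $d\ge 2$, while the statement has no dimensional restriction. This does not matter for the paper, where $1<k<d$ forces $d\ge 3$.

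It is also easy to repair. Keep the seminorm over all of $\mathbb{R}^d$: your splitting of $\eta(x)f(x)-\eta(y)f(y)$ already gives $[\eta Ew]_{W^{s,p}(\mathbb{R}^d)}\le C\|Ew\|_{W^{s,p}(\mathbb{R}^d)}\le C[v]_{W^{s,p}(\Omega)}$. Then replace \eqref{frac Sob ineq sp=d} by the power-series consequence of the Trudinger--Moser inequality on $\widetilde{W}^{s,p}_0(B)$ with the $\mathbb{R}^d$-seminorm (Parini--Ruf \cite{ruf2019} for $d\ge 2$, Iula \cite{Iula2017} for $d=1$), namely $\|f\|_{L^\tau(B)}\le C\tau^{\frac{d-s}{d}}[f]_{W^{s,p}(\mathbb{R}^d)}$. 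This variant works in every dimension and no longer depends on \eqref{frac Sob ineq sp=d}.
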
 
\begin{proof}
 See \cite[Lemma $2.1$]{Adi2025} for the proof.
\end{proof}

\smallskip

The next lemma establishes an inequality for the average of $u$ over two disjoint sets. This lemma is helpful in establishing a fractional Hardy inequality with singularity on flat submanifold in Lemma \ref{Le : flat case sp=d} for the case $sp=d$. The proof of the following lemma is available in \cite[Lemma $2.2$]{Adi2025}.

\begin{lemma}\label{avg}
    Let $E$ and $F$ be disjoint sets in $\mathbb{R}^d$. Then for any $\tau \geq 1$, we have
    \begin{equation}
        |(u)_{E} - (u)_{F}|^{\tau} \leq 2^{\tau} \frac{|E \cup F|}{\min \{ |E|, |F| \} }  \fint_{E \cup F} |u(x)-(u)_{E \cup F}|^{\tau}dx  . 
    \end{equation}
\end{lemma}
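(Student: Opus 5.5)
The inequality is elementary, so the plan is simply to insert the average over the union $G := E \cup F$ and compare each of $(u)_E$ and $(u)_F$ with it.

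\textbf{Step 1: split at the average over $G$.} Set $c := (u)_{G}$. By the triangle inequality,
\[
|(u)_E - (u)_F| \leq |(u)_E - c| + |(u)_F - c| .
\]

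\textbf{Step 2: bound each term by a $\tau$-th mean over $G$.} Write $(u)_E - c = \fint_E (u - c)\,dx$. Jensen's (or H\"older's) inequality with exponent $\tau \geq 1$ gives
\[
|(u)_E - c|^{\tau} \leq \fint_E |u-c|^{\tau}\,dx \leq \frac{1}{|E|}\int_{G} |u-c|^{\tau}\,dx = \frac{|G|}{|E|}\fint_{G}|u-c|^{\tau}\,dx .
\]
The same argument applies to $F$. Since $|E|,|F| \geq \min\{|E|,|F|\}$, both terms are bounded by $M := \frac{|G|}{\min\{|E|,|F|\}}\fint_G |u-c|^\tau\,dx$.

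\textbf{Step 3: combine.} Convexity of $t\mapsto t^\tau$ gives $(a+b)^{\tau} \leq 2^{\tau-1}(a^{\tau}+b^{\tau})$. Applying this to Step 1 and using Step 2 for each term,
\[
|(u)_E - (u)_F|^{\tau} \leq 2^{\tau-1}\cdot 2M = 2^{\tau} M ,
\]
which is exactly the claimed bound.

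There is no real obstacle here. Disjointness of $E$ and $F$ is not even needed; only $E, F \subset E\cup F$ is used. The one point to check is finiteness: we tacitly assume $0<|E|,|F|<\infty$ and $u \in L^{\tau}(E\cup F)$ so that all averages are defined. If $|u-c|^\tau$ is not integrable over $G$, the right-hand side is infinite and the inequality is trivial.
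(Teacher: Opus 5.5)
Your proof is correct and yields exactly the constant $2^{\tau}$: you insert $(u)_{E\cup F}$, apply Jensen on $E$ and on $F$, enlarge the domain of integration to $E\cup F$, and finish with $(a+b)^{\tau}\le 2^{\tau-1}(a^{\tau}+b^{\tau})$, which is the standard argument. The paper gives no proof of its own here (it cites Lemma 2.2 of the earlier article), so there is no different route to compare, and your side remarks are accurate: disjointness is never used, and one needs $0<|E|,|F|<\infty$.
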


\smallskip

The following lemma states a basic inequality, which can be found in several articles (see \cite{Adi2025, adimurthiSubmanifold,  kijaczko2025, Squassina2019, squassina2018, Vivek2025}). For a proof, we refer to \cite[Lemma $2.5$]{adimurthiSubmanifold}.

\begin{lemma}\label{estimate}
    Let  $\tau >1$ and $c>1$. Then for all  $a, ~ b \in \mathbb{R}$, we have 
    \begin{equation}
        (|a| + |b|)^{\tau} \leq c|a|^{\tau} + (1-c^{\frac{-1}{\tau -1}})^{1-\tau} |b|^{\tau} .
    \end{equation}
\end{lemma}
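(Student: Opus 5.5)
The inequality follows from the convexity of $t\mapsto t^{\tau}$ on $[0,\infty)$, once the sum $|a|+|b|$ is split with weights chosen to match the constant $c$. Since $\tau>1$ and $c>1$, set
\begin{equation*}
\lambda := c^{-\frac{1}{\tau-1}} \in (0,1),
\end{equation*}
so that $\lambda^{1-\tau} = c$ and $1-\lambda = 1-c^{-\frac{1}{\tau-1}}\in(0,1)$. The constant $(1-c^{\frac{-1}{\tau-1}})^{1-\tau}$ in Lemma \ref{estimate} is then exactly $(1-\lambda)^{1-\tau}$.

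Next, write $|a|+|b|$ as a convex combination:
\begin{equation*}
|a|+|b| = \lambda \cdot \frac{|a|}{\lambda} + (1-\lambda)\cdot \frac{|b|}{1-\lambda}.
\end{equation*}
Because $t\mapsto t^{\tau}$ is convex for $\tau> 1$, Jensen's inequality for two points gives
\begin{equation*}
(|a|+|b|)^{\tau} \leq \lambda \left(\frac{|a|}{\lambda}\right)^{\tau} + (1-\lambda)\left(\frac{|b|}{1-\lambda}\right)^{\tau} = \lambda^{1-\tau}|a|^{\tau} + (1-\lambda)^{1-\tau}|b|^{\tau}.
\end{equation*}
Substituting $\lambda^{1-\tau}=c$ and $1-\lambda = 1-c^{-\frac{1}{\tau-1}}$ yields the claim.

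There is no real obstacle: the only step requiring care is the choice of $\lambda$, fixed by matching $\lambda^{1-\tau}$ to $c$. One should also check $\lambda\in(0,1)$, which holds precisely because $c>1$ and $\tau>1$. The degenerate cases $a=0$ or $b=0$ are covered by the same computation, since the right-hand side only gets larger. Equivalently, one could apply Hölder's inequality to $\lambda^{1/\tau'}\cdot\frac{|a|}{\lambda^{1/\tau'}} + (1-\lambda)^{1/\tau'}\cdot \frac{|b|}{(1-\lambda)^{1/\tau'}}$, where $\tau'$ is the conjugate exponent, but the convexity argument is shorter.
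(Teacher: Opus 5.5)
Your proof is correct and complete: with $\lambda=c^{-1/(\tau-1)}\in(0,1)$ you have $\lambda^{1-\tau}=c$. Two-point Jensen for $t\mapsto t^{\tau}$, applied to $|a|+|b|=\lambda\,(|a|/\lambda)+(1-\lambda)\,(|b|/(1-\lambda))$, then gives exactly the stated constants, with no case distinction needed.

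The paper does not prove this lemma itself; it only cites \cite[Lemma $2.5$]{adimurthiSubmanifold}. The standard argument for this inequality is instead a one-variable optimization. For $b\neq 0$ one sets $t=|a|/|b|$ and maximizes $g(t)=(1+t)^{\tau}-ct^{\tau}$ on $[0,\infty)$. The critical point satisfies $(1+t)^{\tau-1}=ct^{\tau-1}$, i.e.\ $1+t=(1-c^{-1/(\tau-1)})^{-1}$, and there $g=(1+t)^{\tau-1}=(1-c^{-1/(\tau-1)})^{1-\tau}$.

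The calculus route has to treat $b=0$ separately and check that this critical point is the global maximum (using $g(0)=1$ and $g(t)\to-\infty$ as $t\to\infty$ because $c>1$). Its advantage is that it exhibits the extremal ratio directly. Your convexity route avoids both steps. It also makes transparent why the two constants are $\lambda^{1-\tau}$ and $(1-\lambda)^{1-\tau}$ for complementary weights. Finally, its equality case $|a|/\lambda=|b|/(1-\lambda)$ (the same ratio $t=1/(c^{1/(\tau-1)}-1)$) shows immediately that, for the given $c$, the coefficient of $|b|^{\tau}$ cannot be lowered.
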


\smallskip

The following lemma establishes an inequality for the product of a function $u \in W^{s,p}(\Omega)$ and a function $\xi \in C^{0,1}(\Omega)$. This result plays a crucial role in the patching argument used in the proof of Theorem \ref{Th: Fractional Hardy sp=k}.

\begin{lemma}\label{testfunc}
    Let $\Omega$ be a bounded Lipschitz domain in $\mathbb{R}^d$, and let $sp>1$. Let $u \in W^{s,p}_{0}(\Omega)$ and $\xi \in C^{0,1}(\Omega)$ satisfy $0 \leq \xi \leq 1$. Then there exists a constant $C=C(d,s,p,\Omega)>0$ such that 
    \begin{equation*}
         [\xi u]_{W^{s, p}(\Omega)} \leq C [u]_{W^{s, p}(\Omega)}.
    \end{equation*}
\end{lemma}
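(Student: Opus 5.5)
We prove Lemma~\ref{testfunc} by splitting the Gagliardo double integral of $\xi u$ with the elementary identity
\[
\xi(x)u(x)-\xi(y)u(y) = \xi(x)\bigl(u(x)-u(y)\bigr) + u(y)\bigl(\xi(x)-\xi(y)\bigr).
\]
Since $0\le \xi\le 1$, the convexity estimate $|a+b|^p\le 2^{p-1}(|a|^p+|b|^p)$ (or Lemma~\ref{estimate}) gives
\[
[\xi u]^p_{W^{s,p}(\Omega)} \le 2^{p-1}[u]^p_{W^{s,p}(\Omega)} + 2^{p-1}\int_\Omega |u(y)|^p \int_\Omega \frac{|\xi(x)-\xi(y)|^p}{|x-y|^{d+sp}}\,dx\,dy .
\]
The first term is already of the required form, so everything reduces to controlling the second term by $[u]^p_{W^{s,p}(\Omega)}$.

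For the inner integral we use that $\xi$ is Lipschitz with constant $L$ and bounded by $1$, so $|\xi(x)-\xi(y)|\le \min\{L|x-y|,1\}$. Splitting at $|x-y|=1$ gives
\[
\int_{|x-y|<1} \frac{L^p|x-y|^{p}}{|x-y|^{d+sp}}\,dx + \int_{|x-y|\ge 1}\frac{dx}{|x-y|^{d+sp}} \le C(d,s,p,L).
\]
The first piece converges because $p-sp>0$, i.e.\ $s<1$, and the second because $sp>0$. Hence the second term is bounded by $C\|u\|^p_{L^p(\Omega)}$.

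It remains to bound $\|u\|_{L^p(\Omega)}$ by $[u]_{W^{s,p}(\Omega)}$ for $u\in W^{s,p}_0(\Omega)$. This is the main obstacle: it is exactly where the hypothesis $sp>1$ and the Lipschitz regularity of $\Omega$ enter, since for $sp\le 1$ constants lie in $W^{s,p}_0(\Omega)$ and the bound fails. We handle it in one of two ways.

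\textbf{Option 1.} Invoke the classical fractional boundary Hardy inequality of Dyda \cite{dyda2004} for bounded Lipschitz domains with $sp>1$:
\[
\int_\Omega \frac{|u|^p}{\delta_\Omega^{sp}}\,dx \le C[u]^p_{W^{s,p}(\Omega)}, \qquad u\in C_c^\infty(\Omega),
\]
extended to $W^{s,p}_0(\Omega)$ by density and Fatou's lemma. Boundedness of $\Omega$ gives $\delta_\Omega\le \operatorname{diam}\Omega$, so $\|u\|^p_{L^p(\Omega)}\le (\operatorname{diam}\Omega)^{sp}C[u]^p_{W^{s,p}(\Omega)}$.

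\textbf{Option 2.} Argue by compactness. If the Poincaré-type bound failed, we would obtain $u_n$ with $\|u_n\|_{L^p}=1$ and $[u_n]\to 0$. Rellich compactness for $W^{s,p}(\Omega)$ on Lipschitz domains gives a limit $u_n\to c$, a constant with $|c|\,|\Omega|^{1/p}=1$. However, for $sp>1$ the space $W^{s,p}_0(\Omega)$ does not contain nonzero constants (again a consequence of the Hardy inequality, or of a trace argument), which is a contradiction.

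We take Option 1 as the primary route since it is direct. Combining the three steps gives $[\xi u]_{W^{s,p}(\Omega)}\le C[u]_{W^{s,p}(\Omega)}$, with $C$ depending on $d,s,p,\Omega$ and on the Lipschitz constant of $\xi$, which is fixed in the application.
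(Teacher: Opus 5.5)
Your argument is correct and matches the paper's proof. The paper gets $[\xi u]^p_{W^{s,p}(\Omega)}\le C([u]^p_{W^{s,p}(\Omega)}+\|u\|^p_{L^p(\Omega)})$ by citing \cite[Lemma 5.3]{di2012hitchhikers}, which is the splitting you carry out by hand, and then removes the $L^p$ term with Dyda's boundary Hardy inequality for $sp>1$ and the boundedness of $\delta_\Omega$, exactly as in your Option~1. You are also right that $C$ must depend on the Lipschitz constant of $\xi$; the paper's $C(d,s,p,\Omega)$ hides this, which is harmless only because the patching argument uses finitely many fixed cut-offs.
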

\begin{proof}
  Let $u \in W^{s,p}_{0}(\Omega)$ and $\xi \in C^{0,1}(\Omega)$ satisfy $0 \leq \xi \leq 1$. By \cite[Lemma 5.3]{di2012hitchhikers}, we have
\begin{equation*}
       [\xi u]^{p}_{W^{s,p}(\Omega)}  \leq C \left( [u]^{p}_{W^{s,p}(\Omega)}  + \norm{u}^{p}_{L^{p}(\Omega)} \right).
\end{equation*}
Using the fractional Hardy inequality with boundary singularity for the case $sp>1$ (see \cite[Theorem $1.1$ (T$1$)]{dyda2004}) and the fact that $\delta_{\Omega}(x) < C_{\Omega}$ for all $x \in \Omega$, for some $C_{\Omega}>0$,  we obtain
\begin{equation*}
        \int_{\Omega} |u(x)|^{p} \, dx \leq C_{\Omega}^{sp} \int_{\Omega} \frac{|u(x)|^{p}}{\delta_{\Omega}^{sp}(x)} \, dx \leq C [u]^{p}_{W^{s,p}(\Omega)},
\end{equation*}
where $C=C(d,s,p,\Omega)>0$. Therefore, combining the above two inequalities, we obtain
\begin{equation*}
         [\xi u]_{W^{s,p}(\Omega)} \leq C [u]_{W^{s,p}(\Omega)}.
\end{equation*}
This completes the proof.
\end{proof}


\section{Geometry of smooth compact Submanifolds of Codimension \texorpdfstring{$k$}{k}}\label{Geometry of Compact Lipschitz Submanifolds}
In this section, we describe the geometry of a smooth compact set $K$ of codimension $k$ using the definition given in Definition~\ref{definition}. Similar to a bounded Lipschitz domain, it is sufficient to understand $K$ locally in terms of a Lipschitz function and the regions lying above and below its graph. We begin with a simple model example to illustrate this.

\smallskip

\subsection{The case \texorpdfstring{$\delta_{K}(x) \sim |\xi_k|$}{δK(x) ~ |ξk|}} Let $\gamma : \mathbb{R}^{d-k} \to \mathbb{R}^{k}$ be a Lipschitz function such that
\begin{equation*}
  |\gamma(x_{d-k}) - \gamma(y_{d-k})| \leq C_L |x_{d-k} - y_{d-k}|,  
\end{equation*}
for all $x_{d-k},\, y_{d-k} \in \mathbb{R}^{d-k}$, and for some $C_{L}>0$. Define the domain (see Figure \ref{fig:myfigure0})
\begin{equation*}
   \Omega := \left\{ x =(x_{k}, x_{d-k}) \in \mathbb{R}^{d} : x_{k} \neq \gamma(x_{d-k}) \right\}. 
\end{equation*}
This means that for any $x \in \Omega$, there exists a component $x_i$ of $\gamma(x_{d-k})_i$, for some $1 \le i \le k$, such that $x_i \neq \gamma(x_{d-k})_i$. Set 
\begin{equation*}
    K := \partial \Omega = \left\{ x \in \mathbb{R}^{d} :  x_{k} = \gamma(x_{d-k}) \right\}.
\end{equation*}
\begin{figure}[h!]
\begin{center}
\begin{tikzpicture}[scale=1.0]
\node at (2,3.4) {$(\gamma(x_{d-k}), x_{d-k})$};

\draw[->] (-5,0) -- (5,0) node[right] {$\mathbb{R}^{d-k}$};
\draw[->] (0,-1) -- (0,4) node[above] {$\mathbb{R}^{k}$};

\draw[thick,domain=-4:4,smooth,variable=\x]
  plot (\x,{2 + 0.8*sin(\x r)}) node[pos=1.8,above right] {};

\end{tikzpicture}
\end{center}
\caption{The domain $\Omega$ and the graph of the map $\gamma$.}
\label{fig:myfigure0}
\end{figure}

Define a map $T : \mathbb{R}^{d} \to \mathbb{R}^{d}$ by
\begin{equation*}
   T(x) = (x_k - \gamma(x_{d-k}),\, x_{d-k}). 
\end{equation*}
Then $T(K) = \left\{ x =(x_{k}, x_{d-k}) \in \mathbb{R}^{d} : x_{k}=0 \right\}$. We now show that $T$ is a Lipschitz map.
\begin{align*}
|T(x)-T(y)|^{2}
&= |x_k - \gamma(x_{d-k}) - y_k + \gamma(y_{d-k})|^{2}
   + |x_{d-k}-y_{d-k}|^{2} \\
&\leq |x_k-y_k|^{2}
   + |\gamma(x_{d-k})-\gamma(y_{d-k})|^{2}
   + 2 \langle x_k-y_k,\gamma(x_{d-k})-\gamma(y_{d-k})\rangle \\
&\qquad + |x_{d-k}-y_{d-k}|^{2} \\ & \leq (C^{2}_{L}+1)|x-y|^{2} + 2 \langle x_k-y_k,\gamma(x_{d-k})-\gamma(y_{d-k})\rangle .
\end{align*}
Using $2\langle a,b\rangle \leq |a|^{2} + |b|^{2}$, we obtain
\begin{align*}
|T(x)-T(y)|^{2}
&\leq (C_{L}^{2}+1)|x - y|^{2}
   + |x_k-y_k|^{2}
   + |\gamma(x_{d-k})-\gamma(y_{d-k})|^{2} \\
&\leq (2 C_{L}^{2}+2)\, |x-y|^{2}
   = C_{1}|x-y|^{2}.
\end{align*}
For the inverse map $T^{-1}(x) = (x_{k}+ \gamma(x_{d-k}),\, x_{d-k})$, the same analysis yields
\begin{equation*}
    |T^{-1}(x)-T^{-1}(y)|^{2} \leq C_{1}|x-y|^{2}.
\end{equation*}
Therefore,
\begin{equation*}
    \frac{1}{C} |x-y| \leq |T(x)-T(y)| \leq C |x-y|, \quad \forall \ x, y \in \mathbb{R}^{d}, 
\end{equation*}
for some $C>0$.

\smallskip

Now, let $x \in \Omega$ and $y \in K$ be such that
\begin{equation*}
    \delta_{K}(x) = |x-y| = \inf_{\eta \in K} |x- \eta|.
\end{equation*}
Then
\begin{equation*}
    \delta_{K}(x) = |x-y| \le |x- \eta|,
\quad\text{for all }\eta \in K.
\end{equation*}
Therefore,
\begin{equation*}
    \delta_{K}(x)
= |x-y|
\le C |T(x)-T(\eta)|
= C |T(x) - z|,
\end{equation*}
for all $z \in T(K) = \{ x \in \mathbb{R}^{d} : x_{k}=0 \}$. Thus,
\begin{equation*}
   \delta_{K}(x) \le C \inf_{z \in T(K)} |T(x)-z|
= C |\xi_{k}|, 
\end{equation*}
where $T(x) = (\xi_k, \xi_{d-k})$.  
Using the Lipschitz property of $T^{-1}$, we also obtain $C|\xi_k| \le \delta_{K}(x)$.  
Therefore,
\begin{equation*}
   \delta_{K}(x) \sim |\xi_k|. 
\end{equation*}
This shows that the condition stated in Definition \ref{definition} is not an additional assumption, but rather a geometric consequence of the local coordinate representation of $K$.

\smallskip

\subsection{Jacobian Properties of the Flattening Map}\label{Jacobian of a Map}

We now verify that the mapping $T$ and its inverse $T^{-1}$ both have Jacobian
determinant equal to $1$. Recall that
\[
T(x) = (x_k - \gamma(x_{d-k}),\, x_{d-k})
\qquad\text{and}\qquad
T^{-1}(x) = (x_k + \gamma(x_{d-k}),\, x_{d-k}),
\]
where $x = (x_k, x_{d-k}) \in \mathbb{R}^{k} \times \mathbb{R}^{d-k}$ and
$\gamma : \mathbb{R}^{d-k} \to \mathbb{R}^{k}$ is a Lipschitz function.
Since $\gamma$ is Lipschitz, it is differentiable a.e.\ by Rademacher's theorem,
and the following Jacobian computations hold at every such point.

\medskip
\noindent\textbf{Jacobian of $T$.}
Writing $T(x) = (T_1(x), T_2(x))$, we have
\[
T_1(x) = x_k - \gamma(x_{d-k}), 
\qquad
T_2(x) = x_{d-k}.
\]
Differentiating with respect to the two blocks $x_k$ and $x_{d-k}$ gives
\[
DT(x) =
\begin{pmatrix}
I_k & -D\gamma(x_{d-k}) \\
0   & I_{d-k}
\end{pmatrix},
\]
where $D\gamma(x_{d-k})$ denotes the $k \times (d-k)$ Jacobian matrix of $\gamma$.
This matrix is block upper triangular, and therefore
\[
\det DT(x) = \det(I_k)\,\det(I_{d-k}) = 1.
\]

\medskip
\noindent\textbf{Jacobian of $T^{-1}$.}
Writing $T^{-1}(x) = (S_1(x), S_2(x))$ with
\[
S_1(x) = x_k + \gamma(x_{d-k}), 
\qquad 
S_2(x) = x_{d-k},
\]
we similarly obtain
\[
DT^{-1}(x) =
\begin{pmatrix}
I_k & D\gamma(x_{d-k}) \\
0   & I_{d-k}
\end{pmatrix}.
\]
Again, this is block upper triangular with identity blocks on the diagonal, so
\[
\det DT^{-1}(x) = \det(I_k)\,\det(I_{d-k}) = 1.
\]
Therefore, at every point where $\gamma$ is differentiable, both $T$ and $T^{-1}$
have Jacobian determinant equal to $1$.

\section{Fractional Hardy inequality with singularity on submanifold}\label{fractional hardy}

In this section, we establish a fractional Hardy inequality with singularity along a smooth submanifold of codimension $k$, where $1<k<d$ in the case $sp=d$ and $\tau \geq p$. For any $x \in \mathbb{R}^{d} $, we write $x=(x_{k}, x_{d-k})$, where $x_{k} \in \mathbb{R}^{k}$ and $x_{d-k} \in \mathbb{R}^{d-k}$. We first prove the Hardy inequality on the domain $Q$, as defined in Definition \ref{definition}. Then, using a partition of unity and standard patching techniques, we extend the inequality to smooth submanifolds. Recall  
\begin{equation*}
    Q = \left\{ x= (x_{k}, x_{d-k}) \in \mathbb{R}^{d} :  |x_{k}| < 1 \ \text{and} \ x_{d-k} \in (0, 1)^{d-k}   \right\}.
\end{equation*}
For each  $\ell \leq - 1$, we define (see Figure \ref{fig:myfigure2})
\begin{equation*}
    A_{\ell} = \{ x=(x_{k},x_{d-k}) \in Q :  2^{ \ell} \leq |x_{k}| < 2^{\ell +1} \}.
\end{equation*}
\begin{figure}[!ht] 
\begin{center}
\begin{tikzpicture}
\node at (-2,.5) {$2^{\ell} \leq |x_{k}| < 2^{\ell +1}$};
\node at (1.5,1.5) {$(0,1)^{d-k}$};

\draw[dashed] (0,1) -- (3,1);
\draw[dashed] (0,-1) -- (3,-1);

\draw (0,0) ellipse [y radius=1, x radius=0.5];
\draw (3,0) ellipse [y radius=1, x radius=0.5];
\fill[blue!20, opacity=0.5] (0,0) ellipse [y radius=1, x radius=0.5];
\fill[blue!20, opacity=0.5] (3,0) ellipse [y radius=1, x radius=0.5];

\fill[white, opacity=1] (0,0) ellipse [y radius=0.5, x radius=0.25];
\fill[white, opacity=1] (3,0) ellipse [y radius=0.5, x radius=0.25];

\draw[opacity=0.5] (0,0) ellipse [y radius=.5, x radius=0.25];
\draw[opacity=0.5] (3,0) ellipse [y radius=.5, x radius=0.25];

\foreach \x in {0,0.2,...,3}
\draw[blue!20,opacity=0.5] (\x,0) ellipse [y radius=1, x radius=0.5];
    
\foreach \x in {0,0.2,...,3}
\draw[blue!20,opacity=0.5] (\x,0) ellipse [y radius=.5, x radius=0.25];
    
\draw[->] (-1,0) -- (4,0) node[right] {$\mathbb{R}^{d-k}$};
\draw[->] (0,-1.75) -- (0,1.75) node[above] {$\mathbb{R}^{k}$};
\end{tikzpicture}
\end{center}
\caption{$A_{\ell}$.}
\label{fig:myfigure2}
\end{figure}

\noindent Therefore, we have
\begin{equation*}
    Q \setminus K = \bigcup_{\ell= - \infty}^{-1} A_{\ell},
\end{equation*}
where $K = \{ x=(x_{k}, x_{d-k}) \in Q : x_{k} =0 \}$. Again, we further divide  $A_{\ell}$ into a disjoint union of identical sets, denoted as  $A^{i}_{\ell}$ (see Figure \ref{fig:myfigure3}), such that if  $x = (x_{k}, x_{d-k}) \in A^{i}_{\ell}$, then  $x_{d-k} \in C^{i}_{\ell}$, where  $C^{i}_{\ell}$  is a cube of side length  $2^{ \ell}$ in $\mathbb{R}^{d-k}$. Then, we have
\begin{equation*}
      A_{\ell} = \bigcup_{i = 1}^{\sigma_{\ell}} A^{i}_{\ell}  ,
\end{equation*}
where  $\sigma_{\ell} = 2^{(- \ell)(d-k)}$. The Lebesgue measure of  $A^{i}_{\ell}$ is then given by
\begin{equation*}
    |A^{i}_{\ell}| = \left( \frac{|\mathbb{S}^{k-1}|(2^{k}-1)2^{\ell k}}{k} \right) \times 2^{\ell (d-k)} =  \frac{|\mathbb{S}^{k-1}|(2^{k}-1)2^{\ell d}}{k}  ,
\end{equation*}
where $|\mathbb{S}^{k-1}|$ denotes the $(k-1)$-dimensional surface measure of the unit sphere in $\mathbb{R}^{k}$.
\begin{figure}[!ht] 
\begin{center}
\begin{tikzpicture}
\node at (-2,.5) {$2^{\ell} \leq |x_{k}| < 2^{\ell +1}$};
    
\draw (0,1) -- (3,1);
\draw (0,-1) -- (3,-1);
\draw[dashed] (0,.5) -- (3,.5);
\draw[dashed] (0,-.5) -- (3,-.5);

\draw (0,0) ellipse [y radius=1, x radius=0.5];
\fill[blue!20, opacity=0.5] (0,0) ellipse [y radius=1, x radius=0.5];
\fill[white, opacity=1] (0,0) ellipse [y radius=0.5, x radius=0.25];
\draw (0,0) ellipse [y radius=.5, x radius=0.25];

\draw (3,0) ellipse [y radius=1, x radius=0.5];
\fill[blue!20, opacity=0.5] (3,0) ellipse [y radius=1, x radius=0.5];
\fill[white, opacity=1] (3,0) ellipse [y radius=0.5, x radius=0.25];
\draw (3,0) ellipse [y radius=.5, x radius=0.25];
\node at (1.5,1.5) {$C^{i}_{\ell}$};
\end{tikzpicture}
\end{center}
\caption{$A^{i}_{\ell}= \{ x=(x_{k},x_{d-k}) \in A_{\ell} : x_{d-k} \in C^{i}_{\ell} \}   $.}
\label{fig:myfigure3}
\end{figure}

The next lemma establishes a fractional Hardy inequality on the domain $Q$ with singularity along the flat submanifold $K = \{ x=(x_{k}, x_{d-k}) \in Q : x_{k} =0 \}$. This lemma, together with Definition \ref{definition}, will be helpful in proving the fractional Hardy inequality with singularity on a smooth submanifold of codimension $k$.

\begin{lemma}\label{Le : flat case sp=d}
     Let $Q=B^{k}_{1}(0) \times (0,1)^{d-k}$ and define $K = \{ x=(x_{k}, x_{d-k}) \in Q : x_{k} =0 \}$, where $k \in \mathbb{N}$ and $1<k<d$. Let $p>1$ and $s \in (0,1)$ be such that $sp=d$. Then for any  $\tau \geq p$, there exists a constant  $C= C(d,s,k) > 0$  such that
    \begin{equation}
     \left(  \bigintssss_{Q} \frac{|u(x)|^{\tau}}{|x_{k}|^{d} } \,  dx \right)^{\frac{1}{\tau}} \leq  C \tau^{\frac{d-s}{d}}
 [u]_{W^{s, p}(Q)}, \quad \forall \ u \in W^{s,p}_{0}(Q \setminus K).
    \end{equation}
\end{lemma}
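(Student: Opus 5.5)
The plan is to prove the estimate by a Whitney-type dyadic decomposition of $Q\setminus K$ into the cells $A^{i}_{\ell}$ already introduced, and to reduce everything to the scale-invariant Sobolev inequality of Lemma \ref{sobolev} on each cell, plus a control of the cell averages. On $A^{i}_{\ell}$ we have $2^{\ell}\le |x_k|<2^{\ell+1}$, and $|A^{i}_{\ell}| \simeq 2^{\ell d}$. Hence
\begin{equation*}
\int_{Q}\frac{|u|^{\tau}}{|x_k|^{d}}\,dx \le C\sum_{\ell\le -1}\sum_{i=1}^{\sigma_\ell}\fint_{A^{i}_{\ell}}|u|^{\tau}\,dx
\le C\,2^{\tau}\sum_{\ell,i}\Big(\fint_{A^{i}_{\ell}}|u-(u)_{A^{i}_{\ell}}|^{\tau}dx+|(u)_{A^{i}_{\ell}}|^{\tau}\Big).
\end{equation*}
Each $A^{i}_{\ell}$ is a translate of $\lambda A^{1}_{-1}$ with $\lambda=2^{\ell+1}$, and $A^1_{-1}$ is a fixed bounded Lipschitz set. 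So Lemma \ref{sobolev} gives the oscillation term as $(C\tau^{\frac{d-s}{d}})^{\tau}[u]^{\tau}_{W^{s,p}(A^{i}_{\ell})}$ with $C$ independent of $\ell,i$. Summing, and using \eqref{sumineq} with $\gamma=\tau/p\ge 1$ together with the disjointness of the cells, bounds the total by $(C\tau^{\frac{d-s}{d}})^{\tau}[u]^{\tau}_{W^{s,p}(Q)}$.

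For the averages I would use that $u\in W^{s,p}_0(Q\setminus K)$. By density it suffices to treat $u\in C^\infty_c(Q\setminus K)$, and such $u$ vanishes on all cells with $\ell\le \ell_0(u)$, so the sums are finite and $(u)_{A^{j}_{m}}=0$ for $m$ very negative. Each cube $C^{i}_{\ell}$ is the union of $2^{d-k}$ children $C^{j}_{\ell-1}$. For a child $j$ of $i$, set
\begin{equation*}
D^{j}=\{x:\ 2^{\ell-1}\le|x_k|<2^{\ell+1},\ x_{d-k}\in C^{i}_{\ell}\}.
\end{equation*}
This set contains both $A^{i}_{\ell}$ and $A^{j}_{\ell-1}$, has comparable measure, and is again a scaled translate of a fixed Lipschitz set. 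Lemma \ref{avg} applied to $A^{i}_{\ell}$ and $D^{j}\setminus A^{i}_{\ell}$, combined with Lemma \ref{sobolev} on $D^{j}$, then gives
\begin{equation*}
|(u)_{A^{i}_{\ell}}-(u)_{A^{j}_{\ell-1}}|\le C\tau^{\frac{d-s}{d}}[u]_{W^{s,p}(D^{j})}.
\end{equation*}
The sets $D^{j}$ have bounded overlap.

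Next I apply Lemma \ref{estimate} with a constant $c>1$ and average over the $2^{d-k}$ children. With $S_\ell:=\sum_i|(u)_{A^{i}_{\ell}}|^{\tau}$, this yields
\begin{equation*}
S_\ell\le c\,2^{-(d-k)}\sum_{j}|(u)_{A^{j}_{\ell-1}}|^{\tau}\cdot 2^{d-k}\cdot 2^{-(d-k)}+C_{c,\tau}\,E_\ell ,
\end{equation*}
which reads $S_\ell\le \theta S_{\ell-1}+C_{c,\tau}E_\ell$ after accounting for the measure ratio $|A^{j}_{\ell-1}|/|A^{i}_{\ell}|=2^{-d}$ in the weights. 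Here $E_\ell=\sum_j (C\tau^{\frac{d-s}{d}})^{\tau}[u]^{\tau}_{W^{s,p}(D^j)}$. The gain comes from the weight $|x_k|^{-d}|A|$ being scale-invariant while each parent has $2^{d-k}\ge 2$ children. So one can choose $c$ with $\theta=c\,2^{-(d-k)}<1$ (for instance $c=2^{(d-k)/2}$), iterate from the level where $S_\ell=0$, and sum the geometric series in $\ell$. This gives $\sum_\ell S_\ell\le (1-\theta)^{-1}C_{c,\tau}\sum_\ell E_\ell$, and the last sum is bounded via \eqref{sumineq} and finite overlap by $(C\tau^{\frac{d-s}{d}})^{\tau}[u]^{\tau}_{W^{s,p}(Q)}$.

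The main obstacle is the $\tau$-dependence of the constant $C_{c,\tau}=(1-c^{-1/(\tau-1)})^{1-\tau}$ from Lemma \ref{estimate}. For fixed $c$ it behaves like $(\tau/\ln c)^{\tau-1}$, so after taking $\tau$-th roots it would cost an extra factor $\tau$ and spoil the exponent $\frac{d-s}{d}$. My first attempt to avoid this would be to skip the one-step recursion. Instead, I would compare $(u)_{A^{i}_{\ell}}$ directly with the average over the full tube $\{|x_k|<2^{\ell+1},\ x_{d-k}\in C^{i}_{\ell}\}$. This tube is a single scaled Lipschitz set, so Lemma \ref{avg} and Lemma \ref{sobolev} cost only $C\tau^{\frac{d-s}{d}}$. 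I would then control the tube average through the vanishing of $u$ near $K$, applying the telescoping only to the $\tau=p$ case, where constants are harmless, and interpolating. If that fails, I would keep the recursion but let $c$ depend on the level gap, so that the loss is absorbed into the geometric factor $2^{-(d-k)}$.
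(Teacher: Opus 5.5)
\textbf{There is a genuine gap here.} Your decomposition, the scaling argument on the cells $A^{i}_{\ell}$, and the treatment of the oscillation terms agree with the paper. But the step you flag as the main obstacle is not resolved, and neither repair you sketch closes it.

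\emph{Why the recursion at exponent $\tau$ fails.} Running the parent--child recursion on $S_\ell=\sum_i|(u)_{A^{i}_{\ell}}|^{\tau}$ with Lemma \ref{estimate} at exponent $\tau$ produces $C_{c,\tau}^{1/\tau}\gtrsim \tau/\ln c$. The contraction $c\,2^{-(d-k)}<1$ forces $\ln c<(d-k)\ln 2$, so no choice of $c$ in a one-step recursion removes this. The argument as written therefore only yields the constant $C\tau^{1+\frac{d-s}{d}}$. That is too weak for the Trudinger--Moser application, which needs exactly the exponent $\frac{d-s}{d}$.

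\emph{Why the tube detour does not close.} The tubes $P^{i}_{\ell}=\{|x_k|<2^{\ell+1},\ x_{d-k}\in C^{i}_{\ell}\}$ are nested across levels. A pair of points close to $K$ lies in the tubes of all its ancestor cubes, so the overlap is unbounded and $\sum_{\ell,i}[u]^{p}_{W^{s,p}(P^{i}_{\ell})}$ is not controlled by $[u]^{p}_{W^{s,p}(Q)}$. You would also still have to bound the tube averages themselves, which is the original problem again.

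\emph{The missing step.} Apply \eqref{sumineq} with $\gamma=\tau/p$ to the averages, exactly as you already do for the oscillation terms:
\[
\sum_{\ell,i}|(u)_{A^{i}_{\ell}}|^{\tau}\le\Bigl(\sum_{\ell,i}|(u)_{A^{i}_{\ell}}|^{p}\Bigr)^{\tau/p}.
\]
This reduces everything to the $\tau$-independent bound $\sum_{\ell,i}|(u)_{A^{i}_{\ell}}|^{p}\le C[u]^{p}_{W^{s,p}(Q)}$. The paper proves this with your own recursion, run at the fixed exponent $p$:
\begin{itemize}
\item Lemmas \ref{avg} and \ref{sobolev} with exponent $p$ give $|(u)_{A^{i}_{\ell}}-(u)_{A^{j}_{\ell+1}}|^{p}\le C[u]^{p}_{W^{s,p}(A^{i}_{\ell}\cup A^{j}_{\ell+1})}$.
\item Lemma \ref{estimate} with exponent $p$ and $c=c_1 2^{d-k}$, where $c_1=2/(1+2^{d-k})<1$, followed by averaging over the $2^{d-k}$ children, gives $\sum_j|(u)_{A^{j}_{\ell+1}}|^{p}\le c_1\sum_i|(u)_{A^{i}_{\ell}}|^{p}+C[u]^{p}_{W^{s,p}(A_\ell\cup A_{\ell+1})}$. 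The constant $(1-c^{-1/(p-1)})^{1-p}$ depends only on $d,s,k$.
\item Telescoping from a level where all averages vanish finishes the bound.
\end{itemize}
After taking $\tau$-th roots, the averages contribute only $C[u]_{W^{s,p}(Q)}$. The whole factor $\tau^{\frac{d-s}{d}}$ comes from the oscillation terms, and no interpolation is needed.

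\emph{A minor point.} Lemma \ref{avg} applied to $A^{i}_{\ell}$ and $D^{j}\setminus A^{i}_{\ell}$ compares $(u)_{A^{i}_{\ell}}$ with the average over all children at once, not with $(u)_{A^{j}_{\ell-1}}$. Instead, compare each of the two with $(u)_{D^{j}}$, using $|(u)_E-(u)_{D}|\le\frac{|D|}{|E|}\fint_{D}|u-(u)_{D}|\,dx$ for $E\subset D$. This is harmless since both sets have measure comparable to $|D^{j}|$.
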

\begin{proof}
 Since $C^{\infty}_{c}(\Omega \setminus K)$ is dense in $W^{s,p}_{0}(\Omega \setminus K)$, it is sufficient to prove the inequality for functions $u \in C^{\infty}_{c}(\Omega \setminus K)$. For any $A^{i}_{\ell}$, applying Lemma \ref{sobolev} with $\Omega = \{ (x_{k}, x_{d-k}) : 1 < |x_{k}| < 2  \ \text{and} \ x_{d-k} \in (1,2)^{d-k} \}$, and $ \lambda = 2^{\ell}$ and using translation invariance, we have
\begin{equation*}
    \fint_{A^{i}_{\ell}} |u(x)-(u)_{A^{i}_{\ell}}|^{\tau} \,  dx \leq C^{\tau}  \tau^{\frac{(d-s) \tau}{d}} [u]^{\tau}_{W^{s, p}(A^{i}_{\ell})}  ,
\end{equation*}
where $C= C(d,s,k)>0$. For any $x = (x_{k},x_{d-k}) \in A^{i}_{\ell}$, we have $ \frac{1}{|x_{k}|} \leq \frac{1}{2^{\ell}}$. Therefore, using this we arrive at
\begin{align*}
    \int_{A^{i}_{\ell}} \frac{|u(x)|^{\tau}}{|x_{k}|^{d}} \,  dx & \leq \frac{1}{2^{\ell d}} \int_{A^{i}_{\ell}} |u(x)-(u)_{A^{i}_{\ell}} + (u)_{A^{i}_{\ell}}|^{\tau} \,  dx \\ &
    \leq \frac{2^{\tau}}{2^{\ell d}} \int_{A^{i}_{\ell}} |u(x)-(u)_{A^{i}_{\ell}}|^{\tau} \,  dx + \frac{2^{\tau}}{2^{\ell d}} \int_{A^{i}_{\ell}} |(u)_{A^{i}_{\ell}}|^{\tau} \, dx.
    \end{align*}
  Now, using the fractional Sobolev inequality for any $A^{i}_{\ell}$ as mentioned above,  we obtain
    \begin{align*}
    \int_{A^{i}_{\ell}} \frac{|u(x)|^{\tau}}{|x_{k}|^{d}} \,  dx & \leq 2^{\tau} \frac{|A^{i}_{\ell}|}{2^{\ell d}} \fint_{A^{i}_{\ell}} |u(x)-(u)_{A^{i}_{\ell}}|^{\tau} \,  dx + 2^{\tau} \frac{|A^{i}_{\ell}|}{2^{\ell d}} |(u)_{A^{i}_{\ell}}|^{\tau} \\ &
    \leq C^{\tau} \tau^{\frac{(d-s)\tau}{d}} \left( \frac{|\mathbb{S}^{k-1}|(2^{k}-1)2^{\ell d}}{k} \right) \frac{1}{2^{\ell d}}  [u]^{\tau}_{W^{s, p}(A^{i}_{\ell})} \\ & \hspace{5mm}  + 2^{\tau} \left( \frac{|\mathbb{S}^{k-1}|(2^{k}-1)}{k} \right)  |(u)_{A^{i}_{\ell}}|^{\tau}  
   \\ & \leq C^{\tau} \tau^{\frac{(d-s)\tau}{d}} [u]^{\tau}_{W^{s, p}(A^{i}_{\ell})} + C^{\tau}  |(u)_{A^{i}_{\ell}}|^{\tau}  ,
\end{align*}   
where $C= C(d,s, k)$ is a constant. By summing the above inequality from $i=1$ to $\sigma_{\ell}$, we obtain
\begin{equation*}
 \sum_{i=1}^{\sigma_{\ell}} \int_{A^{i}_{\ell}} \frac{|u(x)|^{\tau}}{|x_{k}|^{d}} \,  dx \leq   C^{\tau} \tau^{\frac{(d-s)\tau}{d}} \sum_{i=1}^{\sigma_{\ell}}  [u]^{\tau}_{W^{s, p}(A^{i}_{\ell})} + C^{\tau} \sum_{i=1}^{\sigma_{\ell}}  |(u)_{A^{i}_{\ell}}|^{\tau}.
\end{equation*}
From the inequality \eqref{sumineq} with $\gamma = \frac{\tau}{p} \geq 1$, we have
\begin{equation*}
    \sum_{i=1}^{\sigma_{\ell}} \left( [u]^{p}_{W^{s, p}(A^{i}_{\ell})} \right)^{\frac{\tau}{p}} \leq \left( \sum_{i=1}^{\sigma_{\ell}} [u]^{p }_{W^{s, p}(A^{i}_{\ell})} \right)^{\frac{\tau}{p}} \leq [u]^{\tau}_{W^{s,p}(A_{\ell})}.
\end{equation*}
Combining the above two inequalities, we obtain
\begin{equation*}
     \int_{A_{\ell}} \frac{|u(x)|^{\tau}}{|x_{k}|^{d}}  \, dx \leq   C^{\tau} \tau^{\frac{(d-s)\tau}{d}}  [u]^{\tau}_{W^{s, p}(A_{\ell})} + C^{\tau} \sum_{i=1}^{\sigma_{\ell}}  |(u)_{A^{i}_{\ell}}|^{\tau}.
\end{equation*}
Again, summing the above inequality from $\ell = m$ to $-1$, we arrive at 
\begin{equation*}
  \sum_{\ell=m}^{-1} \int_{A_{\ell}} \frac{|u(x)|^{\tau}}{|x_{k}|^{d}} \,  dx \leq   C^{\tau} \tau^{\frac{(d-s)\tau}{d}} \sum_{\ell=m}^{-1}  [u]^{\tau}_{W^{s, p}(A_{\ell})} + C^{\tau} \sum_{\ell=m}^{-1} \sum_{i=1}^{\sigma_{\ell}}  |(u)_{A^{i}_{\ell}}|^{\tau}.  
\end{equation*}
Applying the inequality \eqref{sumineq} with $\gamma = \frac{\tau}{p} \geq 1$, we have
\begin{equation*}
    \sum_{\ell=m}^{-1} \sum_{i=1}^{\sigma_{\ell}}  |(u)_{A^{i}_{\ell}}|^{\tau} =  \sum_{\ell=m}^{-1} \sum_{i=1}^{\sigma_{\ell}}  \left(|(u)_{A^{i}_{\ell}}|^{p} \right)^{ \frac{\tau}{p}} \leq \left( \sum_{\ell=m}^{-1} \sum_{i=1}^{\sigma_{\ell}}  |(u)_{A^{i}_{\ell}}|^{p}\right)^{\frac{\tau}{p}}.
\end{equation*}
Therefore, combining the above two inequalities, we obtain
\begin{equation}\label{eqnn1}
  \sum_{\ell=m}^{-1} \int_{A_{\ell}} \frac{|u(x)|^{\tau}}{|x_{k}|^{d}} \,  dx \leq   C^{\tau} \tau^{\frac{(d-s)\tau}{d}} \sum_{\ell=m}^{-1}  [u]^{\tau}_{W^{s, p}(A_{\ell})} + C^{\tau} \left( \sum_{\ell=m}^{-1} \sum_{i=1}^{\sigma_{\ell}}  |(u)_{A^{i}_{\ell}}|^{p}\right)^{\frac{\tau}{p}}. 
\end{equation}
\begin{figure}[!ht]
\begin{center}
\scalebox{0.8}{%
\begin{tikzpicture}

\draw (0,1.5) -- (6,1.5);
\draw (0,-1.5) -- (6,-1.5);

\draw (2,1) -- (4,1);
\draw (2,-1) -- (4,-1);

\draw (0,0) ellipse [y radius=1.5, x radius=1];
\fill[blue!20, opacity=0.5] (0,0) ellipse [y radius=1.5, x radius=1];
\fill[white, opacity=1] (0,0) ellipse [y radius=1, x radius=0.5];
\draw (0,0) ellipse [y radius=.5, x radius=0.25];

\draw (2,0) ellipse [y radius=1, x radius=0.5];
\fill[blue!20, opacity=0.5] (2,0) ellipse [y radius=1, x radius=.5];
\fill[green!20, opacity=0.5] (2,0) ellipse [y radius=1, x radius=.5];
\fill[white, opacity=1] (2,0) ellipse [y radius=.5, x radius=0.25];
\draw (0,0) ellipse [y radius=1, x radius=0.5];
\draw (2,0) ellipse [y radius=.5, x radius=0.25];

\draw (6,0) ellipse [y radius=1.5, x radius=1];
\fill[blue!20, opacity=0.5] (6,0) ellipse [y radius=1.5, x radius=1];
\fill[white, opacity=1] (6,0) ellipse [y radius=1, x radius=0.5];
\draw (6,0) ellipse [y radius=.5, x radius=0.25];

\draw (4,0) ellipse [y radius=1, x radius=0.5];
\fill[blue!20, opacity=0.5] (4,0) ellipse [y radius=1, x radius=.5];
\fill[green!20, opacity=0.5] (4,0) ellipse [y radius=1, x radius=.5];
\fill[white, opacity=1] (4,0) ellipse [y radius=.5, x radius=0.25];
\draw (6,0) ellipse [y radius=1, x radius=0.5];
\draw (4,0) ellipse [y radius=.5, x radius=0.25];

\draw[dashed] (0,1) -- (6,1);
\draw[dashed] (0,-1) -- (6,-1);
\draw[dashed] (0,-.5) -- (6,-.5);
\draw[dashed] (0,.5) -- (6,.5);

\node at (2,.7) {$A^{i}_{\ell}$};
\node at (0,1.22) {$A^{j}_{\ell +1}$};
\node at (2.4,2) {$C^{j}_{\ell+1}$};
\node at (3,1.25) {$C^{i}_{\ell}$};

\end{tikzpicture}%
}
\end{center}
\caption{$A^{i}_{\ell}$ and $A^{j}_{\ell +1}$.}
\label{fig:myfigure4}
\end{figure}
Let  $A^{j}_{\ell+1}$ be a set such that   $A^{i}_{\ell}$ lies below the set  $A^{j}_{\ell+1}$ (see Figure \ref{fig:myfigure4}). Then using Lemma  \ref{avg} with  $E=A^{i}_{\ell}$ and  $F = A^{j}_{\ell+1}$, we get 
\begin{equation}\label{compineq1}
|(u)_{A^{i}_{\ell}} - (u)_{A^{j}_{\ell+1}} |^{p} \leq C \fint_{A^{i}_{\ell} \cup A^{j}_{\ell+1} }  |u(x) - (u)_{A^{i}_{\ell} \cup A^{j}_{\ell+1}}|^{p} \, dx,
\end{equation}
where $C= C(d,s,k)$ is a constant. Choose an open set  $\Omega$ such that  $\Omega_{\lambda}$ is a translation of  $ A^{i}_{\ell} \cup A^{j}_{\ell+1}$ with scaling parameter  $\lambda=2^{\ell+1}$. Applying Lemma  \ref{sobolev} with this  $\Omega$ and  $\lambda=2^{\ell+1}$, and using translation invariance, we obtain
\begin{equation}\label{compineq2}
   |(u)_{A^{i}_{\ell}} - (u)_{A^{j}_{\ell+1}} |^{p} \leq C \fint_{A^{i}_{\ell} \cup A^{j}_{\ell+1} }  |u(x) - (u)_{A^{i}_{\ell} \cup A^{j}_{\ell+1}}|^{p} \, dx \leq  C [u]^{p}_{W^{s, p}(A^{i}_{\ell} \cup A^{j}_{\ell+1})},
\end{equation}
where $C=C(d,s,k)>0$. Using the triangle inequality, we have 
\begin{equation*}
    |(u)_{A^{j}_{\ell+1}}|^{p} \leq  \left( |(u)_{A^{i}_{\ell}}| + |(u)_{A^{i}_{\ell}} - (u)_{A^{j}_{\ell+1}}| \right)^{p}.
\end{equation*}
Applying Lemma \ref{estimate} with $c :=c_{1}2^{d-k}>1$ where $c_{1} = \frac{2}{1+ 2^{d-k}} <1$ together with the inequality \eqref{compineq2}, we obtain
\begin{equation*}
    |(u)_{A^{j}_{\ell+1}}|^{p} \leq c_{1} 2^{d-k} |(u)_{A^{i}_{\ell}}|^{p} +  C [u]^{p}_{W^{s, p}(A^{i}_{\ell} \cup A^{j}_{\ell+1})}  ,
\end{equation*}
where $C=C(d,s,k)>0$. Since there are $2^{d-k}$ such $A^{i}_{\ell}$'s lies below $A^{j}_{\ell+1}$,  summing the above inequality from $i=2^{d-k}(j-1)+1$ to  $2^{d-k}j$, we obtain
\begin{align*}
     2^{d-k}  |(u)_{A^{j}_{\ell+1}}|^{p}  \leq c_{1} 2^{ d-k} \sum_{i=2^{d-k}(j-1)+1}^{2^{d-k}j}  |(u)_{A^{i}_{\ell}}|^{p} 
       +  C  \sum_{i=2^{d-k}(j-1)+1}^{2^{d-k}j} [u]^{p}_{W^{s, p}(A^{i}_{\ell} \cup A^{j}_{\ell+1})}  .
\end{align*}
Multiplying the above inequality with $\frac{1}{2^{d-k}}$ and using $\frac{1}{2^{d-k}} \leq 1$, and summing the above inequality from $j=1$ to $\sigma_{\ell+1}$, we obtain
\begin{align}\label{ineqn1}
  \sum_{j=1}^{\sigma_{\ell+1}}  |(u)_{A^{j}_{\ell+1}}|^{p} & \leq c_{1}
\sum_{i=1}^{\sigma_{\ell}} |(u)_{A^{i}_{\ell}}|^{p} 
  +  C \sum_{j=1}^{\sigma_{\ell+1}} \left( \sum_{i=2^{d-k}(j-1)+1}^{2^{d-k}j}  [u]^{p}_{W^{s, p}(A^{i}_{\ell} \cup A^{j}_{\ell+1})} \right) \nonumber \\ & \leq c_{1}  
\sum_{i=1}^{\sigma_{\ell}} |(u)_{A^{i}_{\ell}}|^{p} + C  [u]^{p}_{W^{s, p}(A_{\ell} \cup A_{\ell+1})}  . 
\end{align}
By summing the above inequality from $\ell=m \in \mathbb{Z}^{-}$ to $-2$, we get
\begin{equation*}
   \sum_{\ell=m}^{-2}   \sum_{j=1}^{\sigma_{\ell+1}}  |(u)_{A^{j}_{\ell+1}}|^{p} \leq c_{1} \sum_{\ell=m}^{-2} \sum_{i=1}^{\sigma_{\ell}} |(u)_{A^{i}_{\ell}}|^{p} + C   \sum_{\ell=m}^{-2} [u]^{p}_{W^{s, p}(A_{\ell} \cup A_{\ell+1})}  .
\end{equation*}
By changing sides, rearranging, and re-indexing, we get
\begin{equation*}
 (1-c_{1})   \sum_{\ell=m+1}^{-1} \sum_{i=1}^{\sigma_{\ell}} |(u)_{A^{i}_{\ell}}|^{p} \leq  \sum_{j=1}^{\sigma_{m}} |(u)_{A^{j}_{m}}|^{p} + C\sum_{\ell=m}^{-2} [u]^{p}_{W^{s, p}(A_{\ell} \cup A_{\ell+1})}  .
\end{equation*}
Now choose $-m$ large enough so that $|(u)_{A^{j}_{m}}|=0$ for all $j \in \{ 1, \dots, \sigma_{m} \}$. Then the above inequality reduces to
\begin{equation*}
       \sum_{\ell=m}^{-1}  \sum_{i=1}^{\sigma_{\ell}} |(u)_{A^{i}_{\ell}}|^{p} \leq C  \sum_{\ell=m}^{-2} [u]^{p}_{W^{s, p}(A_{\ell} \cup A_{\ell+1})} \leq C[u]^{p}_{W^{s, p}(Q)}  ,
\end{equation*}
where $C=C(d,s,k)>0$. Therefore, we have
\begin{equation}\label{eqnn}
      \left( \sum_{\ell=m}^{-1}  \sum_{i=1}^{\sigma_{\ell}} |(u)_{A^{i}_{\ell}}|^{p} \right)^{\frac{\tau}{p}} \leq   C^{\tau} [u]^{\tau}_{W^{s, p}(Q)} .
\end{equation}
Combining the inequalities \eqref{eqnn1} and \eqref{eqnn} and using $\tau^{\frac{(d-s) \tau}{d}} \geq 1$ yields
 \begin{equation*}
      \left( \int_{Q} \frac{|u(x)|^{\tau}}{|x_{k}|^{d} } \, dx \right)^{\frac{1}{\tau}} \leq C  \tau^{\frac{d-s}{d}}[u]_{W^{s, p}(Q)}.
 \end{equation*}
 This proves the lemma.
\end{proof}

\smallskip

\begin{proof}[\textbf{Proof of Theorem \ref{Th: Fractional Hardy sp=k}}]
Let $\Omega$ be a bounded Lipschitz domain in $\mathbb{R}^{d}$, and let $K \subset \Omega$ be a compact set of class $C^{0,1}$ of  codimension $k$, where $k \in \mathbb{N}$ and $1<k<d$. For any $x \in K$, there exists a ball $B_{r_{x}}(x), ~ r_{x}>0$ such that Definition  \ref{definition} hold with an isomorphism $T_{x}$. Then $ K \subset \cup_{x \in  K} B_{r_{x}}(x)$ and since $K$ is compact, there exists $x_{1}, \dots, x_{n} \in  K$ such that
\begin{equation*}
    K  \subset \bigcup_{i=1}^{n} B_{r_{i}}(x_{i}) ,
\end{equation*}
where $r_{i}=r_{x_{i}}$. Since $C^{\infty}_{c}(\Omega \setminus K)$ is dense in $W^{s,p}_{0}(\Omega \setminus K)$, it is sufficient to prove the inequality for functions $u \in C^{\infty}_{c}(\Omega \setminus K)$. Let $u \in C^{\infty}_{c}(\Omega \setminus K)$ and $sp=d$. Let  $\Omega \setminus K \subset \cup_{i=0}^{n} \Omega_{i}$, where  $\Omega_{0} \subset \Omega \setminus K$ is a bounded Lipschitz domain in $\mathbb{R}^{d}$ such that $\delta_{K}(x)> R$ for all $x \in \Omega_{0}$ for some $R>0$, and $\Omega_{i} = B_{r_{i}}(x_{i})$ for all  $ 1 \leq i \leq n $. Let $ \{ \eta_{i} \}_{i=0}^{n}$ be the associated partition of unity. Then
\begin{equation*}
    u = \sum_{i=0}^{n} u_{i}, \hspace{.3cm} \text{where} \  u_{i} = \eta_{i} u.
\end{equation*}
From Lemma \ref{testfunc}, we have
 \begin{equation*}
    [ u_{i}]_{W^{s,p}(\Omega)} \leq C [u ]_{W^{s,p}(\Omega)}, \hspace{3mm} \forall \ 0 \leq i \leq n .
\end{equation*}
Therefore, it is sufficient to prove Theorem \ref{Th: Fractional Hardy sp=k} for all  $u_{i}, ~ 0 \leq i \leq n$. As $\operatorname{supp} u_{0}  \subset \Omega_{0}$, and for all $x \in \Omega_{0}$, we have
 \begin{equation*}
     C_{1} \leq \delta_{K}(x) \leq C_{2}, \quad \text{for some} \  C_{1}, C_{2} >0 .
 \end{equation*}
 Therefore, using the above estimate for any $x \in \Omega_{0}$, and  the fractional Sobolev inequality \eqref{frac Sob ineq sp=d}, we get
\begin{equation*} 
 \left( \int_{\Omega_{0}} \frac{|u_{0}(x)|^{\tau}}{\delta_{K}^{d}(x) } \,  dx \right)^{\frac{1}{\tau}} \leq C \Bigg( \int_{\Omega_{0}} |u_{0}(x)|^{\tau} \, dx \Bigg)^{\frac{1}{\tau}} \leq C \tau^{\frac{d-s}{d}} [u_{0}]_{W^{s,p}(\Omega_{0})} .
\end{equation*}
For any $1 \leq i \leq n$, we have $ \operatorname{supp} u_{i}  \subset (\Omega \setminus K ) \cap \Omega_{i} $. For any $x_{i} \in K$, consider the isomorphism $T_{x_{i}}$ which is obtained using Definition \ref{definition} and Section \ref{Geometry of Compact Lipschitz Submanifolds}, then 
 \begin{equation*}
      \delta_{K}(x) \sim |\xi_{k}| \hspace{.3cm} \text{for all} \  x \in (\Omega \setminus K) \cap \Omega_{i},
 \end{equation*}
where $T_{x_{i}}((\xi_{k}, \xi_{d-k})) = x$.
Since $T_{x_i}$ is a bi-Lipschitz homeomorphism onto its image and \begin{equation*} \operatorname{supp} u_i \subset (\Omega\setminus K)\cap\Omega_i, \end{equation*} we have \begin{equation*} \operatorname{supp}(u_i\circ T_{x_i}) = T_{x_i}^{-1}(\operatorname{supp}u_i) \subset T_{x_i}^{-1}\bigl((\Omega\setminus K)\cap\Omega_i\bigr) \subset Q. \end{equation*} Extending the function $u_i\circ T_{x_i}$ on $Q$ by $0$ and hence, $u_i\circ T_{x_i} \in W^{s,p}_{0}(Q \setminus K_{0})$, where $K_{0} = \{ x=(x_{k}, x_{d-k}) \in Q : x_{k} =0 \}$ . Therefore, applying Lemma \ref{Le : flat case sp=d} with the above $u_i\circ T_{x_i}$ and using Subsection \ref{Jacobian of a Map}, we have 
\begin{align*} \left( \int_{(\Omega \setminus K) \cap \Omega_{i}} \frac{|u_{i}(x)|^{\tau}}{\delta_{K}^{d}(x) } dx \right)^{\frac{1}{\tau}} & \sim \left( \int_{T^{-1}_{x_{i}}((\Omega \setminus K ) \cap \Omega_{i})} \frac{|u_{i} \circ T_{x_{i}}(\xi)|^{\tau}}{|\xi_{k}|^{d}} d \xi \right)^{\frac{1}{\tau}} \\ & \leq C \tau^{\frac{d-s}{d}} [ u_{i} \circ T_{x_{i}} ]_{W^{s,p}(T^{-1}_{x_{i}}((\Omega \setminus K) \cap \Omega_{i}))} \\ & \leq \, C \tau^{\frac{d-s}{d}} [ u_{i} ]_{W^{s,p}((\Omega \setminus K) \cap \Omega_{i})}, 
\end{align*}
where $C=C(d,s,k,\Omega,K,T_{x_{i}})>0$. Since there are only finitely many maps $T_{x_{i}}$, we may choose a uniform constant $C=C(d,s,k,\Omega,K)>0$ such that the above inequalities hold. This completes the proof of the theorem.
\end{proof}


\section{Proof of Theorem \ref{Th : Trud-Moser inequ}}\label{Proof of Theorem 1}

In this section, we prove Theorem \ref{Th : Trud-Moser inequ}. First, we establish the existence of $\alpha_{*}$ as stated in Theorem \ref{Th : Trud-Moser inequ}. Then, using a Moser-type function, we show that the inequality fails for any $\alpha > \alpha^{*}_{s,d}$.

\subsection{Existence of \texorpdfstring{$\alpha_{*}$}{alpha not} in Theorem \ref{Th : Trud-Moser inequ}}\label{subsec1}
By Theorem \ref{Th: Fractional Hardy sp=k}, for every $\tau \geq p$ and $u \in W^{s,p}_{0}(\Omega \setminus K)$ such that $[u]_{W^{s,p}(\Omega)} \leq 1$, we have the following inequality:
\begin{equation*}
    \left(  \int_{\Omega} |u(x)|^{\tau} \frac{dx}{\delta^{d}_{K}(x)} \right)^{\frac{1}{\tau}} \leq C\tau^{\frac{d-s}{d}},
\end{equation*}
where $C= C(d,s,k, \Omega, K)$ is a positive constant. Now, for  $\alpha > 0$, using   $\frac{nd}{d-s} \geq p = \frac{d}{s}$ for all  $n \geq j_{p}-1$ as  $j_{p}-1 \geq \frac{d}{s}-1 > j_{p}-2$, and from the definition of  $\Phi_{d,s}$ defined in  \eqref{Defn: Phi}, we obtain
\begin{align*}
      \int_{\Omega} \Phi_{d,s} \left( \alpha  |u(x)|^{\frac{d}{d-s}}  \right) \, \frac{dx}{\delta^{d}_{K}(x)}  & = \sum_{n=j_{p}-1}^{\infty} \frac{\alpha^{n}}{n!} \int_{\Omega}  |u(x)|^{\frac{nd}{d-s}} \, \frac{dx}{\delta^{d}_{K}(x)} \\ & \leq \sum_{n=j_{p}-1}^{\infty} \frac{1}{n!} \left( \alpha C^{\frac{d}{d-s}}\frac{d}{d-s}  \right)^{n} n^{n}.   
\end{align*}
Using Stirling's approximation  $n! \sim \sqrt{2 \pi n} \left( \frac{n}{e} \right)^{n}$ as  $n \to \infty$, applying this approximation to the sum leads to the conclusion that there exists a sufficiently small  $\alpha_{*}>0$, such that
\begin{align*}
    \sup_{u \in W^{s,p}_{0}(\Omega \setminus K), \ [u]_{W^{s,p}(\Omega)} \leq 1}  \int_{\Omega} \Phi_{d,s} \left( \alpha  |u(x)|^{\frac{d}{d-s}}  \right) \, \frac{dx}{\delta^{d}_{K}(x)}     < \infty , \quad   \forall \ \alpha \in [0, \alpha_{*}).
\end{align*}
This establishes the existence of  $\alpha_{*}$ in Theorem  \ref{Th : Trud-Moser inequ}.

\bigskip

Now, we establish that for any $\alpha > 0$, the function $\Phi_{d,s} \!\left( \alpha \, |u(x)|^{\frac{d}{d-s}} \right) \frac{1}{\delta^{d}_{K}(x)}  \in L^{1}(\Omega)$ for any $u \in W^{s,p}_{0} (\Omega \setminus K)$. Following the ideas in \cite{Adi2025, Iula2017, ruf2019}, we obtain the following corollary.

\begin{cor}\label{corollary 1}
Let $\Omega$ be a bounded Lipschitz domain in $\mathbb{R}^{d}$, and let $K \subset \Omega$ be a smooth compact set of class $C^{0,1}$ of codimension $k$, where $k \in \mathbb{N}$ and $1 < k < d$.  
Let $p > 1$ and $s \in (0,1)$ be such that $sp = d$.  
Then for any $u \in W^{s,p}_{0} (\Omega \setminus K)$ and $\alpha > 0$, we have
\begin{equation}
   \int_{\Omega}  \Phi_{d,s} \!\left( \alpha \, |u(x)|^{\frac{d}{d-s}} \right) \frac{1}{\delta^{d}_{K}(x)} \, dx < \infty,
\end{equation}
where $\Phi_{d,s}$ is defined in \eqref{Defn: Phi}.
\end{cor}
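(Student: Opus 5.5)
The plan is the standard density-plus-splitting argument of \cite{ruf2019, Iula2017, Adi2025}, with Theorem \ref{Th: Fractional Hardy sp=k} and the existence of $\alpha_{*}$ from Subsection \ref{subsec1} as the only analytic inputs. Fix $u \in W^{s,p}_{0}(\Omega\setminus K)$ and $\alpha>0$; we may assume $u \not\equiv 0$. By density pick $v \in C^{\infty}_{c}(\Omega\setminus K)$ with $[u-v]_{W^{s,p}(\Omega)} \leq \varepsilon$, where $\varepsilon>0$ will be fixed below. Write $w = u - v$. For $q=\frac{d}{d-s}>1$ we have the elementary convexity bound
\begin{equation*}
|u|^{q} \leq (|w|+|v|)^{q} \leq (1+\eta)^{q-1}|w|^{q} + \bigl(1-(1+\eta)^{-1}\bigr)^{1-q}|v|^{q},
\end{equation*}
which is Lemma \ref{estimate} with $c=(1+\eta)^{q-1}$; any fixed $\eta>0$, say $\eta=1$, will do. Hence $\alpha|u|^{q} \leq A|w|^{q} + B|v|^{q}$ with constants $A=2^{q-1}\alpha$ and $B=2^{q-1}\alpha$.

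Next I would use that $\Phi_{d,s}$ is convex and nondecreasing on $[0,\infty)$ with $\Phi_{d,s}(0)=0$, so that $\Phi_{d,s}(a+b) \leq \frac12\Phi_{d,s}(2a)+\frac12\Phi_{d,s}(2b)$. Therefore
\begin{equation*}
\int_{\Omega}\Phi_{d,s}(\alpha|u|^{q})\frac{dx}{\delta_K^{d}} \leq \frac12\int_{\Omega}\Phi_{d,s}(2A|w|^{q})\frac{dx}{\delta_K^{d}} + \frac12\int_{\Omega}\Phi_{d,s}(2B|v|^{q})\frac{dx}{\delta_K^{d}} .
\end{equation*}

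For the second term, $v$ is bounded and its support lies at positive distance from $K$, so $\delta_K \geq c_v>0$ on $\operatorname{supp} v$. The integrand therefore vanishes off $\operatorname{supp} v$ and is bounded on it, so this integral is finite. For the first term, write $2A|w|^{q} = 2A\varepsilon^{q}\,|w/\varepsilon|^{q}$, where $[w/\varepsilon]_{W^{s,p}(\Omega)}\leq 1$ and $w/\varepsilon\in W^{s,p}_{0}(\Omega\setminus K)$. Choose $\varepsilon$ so small that $2A\varepsilon^{q}<\alpha_{*}$. The supremum bound from Subsection \ref{subsec1} (stated there for the seminorm on $\Omega$, which is all we need) then gives finiteness uniformly. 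This proves the corollary.

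The only delicate point is the convexity splitting of $\Phi_{d,s}$. It is not exactly the exponential, but it is a power series with nonnegative coefficients, so it is convex, increasing and vanishes at $0$; that is all the argument uses. One should also check that the Hardy-based bound from Subsection \ref{subsec1} applies to $w/\varepsilon$, which is immediate since $w \in W^{s,p}_{0}(\Omega\setminus K)$. No further obstacle is expected.
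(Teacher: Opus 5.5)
Your proposal is correct and follows essentially the same route as the paper: you split $u=v+w$ by density with $[w]_{W^{s,p}(\Omega)}$ small, derive $\Phi_{d,s}(\alpha|u|^{q})\le \frac12\Phi_{d,s}(2^{q}\alpha|v|^{q})+\frac12\Phi_{d,s}(2^{q}\alpha|w|^{q})$ (the paper gets this term by term in the power series, you get it from convexity and monotonicity of $\Phi_{d,s}$), use compact support away from $K$ for the $v$-term, and apply the $\alpha_*$ bound of Subsection \ref{subsec1} to the rescaled $w$. Normalizing by $\varepsilon$ rather than by $[g]_{W^{s,p}(\Omega)}$ is a harmless and slightly cleaner variant, since it avoids dividing by a possibly vanishing seminorm.
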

\begin{proof}
   Let $u \in W^{s,p}_{0}(\Omega \setminus K)$ and fix $\alpha > 0$.  
Since $C^{\infty}_{c}(\Omega \setminus K)$ is dense in $W^{s,p}_{0}(\Omega \setminus K)$, we may choose $f \in C^{\infty}_{c}(\Omega \setminus K)$ and $g \in W^{s,p}_{0}(\Omega \setminus K)$ such that
\begin{equation*}
  u = f + g
\quad\text{and}\quad
[g]_{W^{s,p}(\Omega)} \leq \frac{1}{2} \left( \frac{\alpha_{*}}{2\alpha} \right)^{\!\frac{d-s}{d}},  
\end{equation*}
where $\alpha_{*}$ is the constant appearing in Theorem \ref{Th : Trud-Moser inequ}. By the triangle inequality, for $n \geq j_{p}-1$ we have
\begin{equation*}
   |u(x)|^{\frac{nd}{d-s}}
= |f(x) + g(x)|^{\frac{nd}{d-s}}
\leq 2^{\frac{nd}{d-s}-1} |f(x)|^{\frac{nd}{d-s}}
+ 2^{\frac{nd}{d-s}-1} |g(x)|^{\frac{nd}{d-s}}. 
\end{equation*}
It follows that
\begin{equation*}
    \sum_{n=j_{p}-1}^{\infty}  \frac{\alpha^{n}|u(x)|^{\frac{nd}{d-s}}}{n!} \leq \frac{1}{2}  \sum_{n=j_{p}-1}^{\infty}  \frac{\alpha^{n} 2^{\frac{nd}{d-s}} |f(x)|^{\frac{nd}{d-s}}}{n!} + \frac{1}{2}\sum_{n=j_{p}-1}^{\infty}  \frac{ \alpha^{n} 2^{\frac{nd}{d-s}} |g(x)|^{\frac{nd}{d-s}}}{n!}.
\end{equation*}
Therefore,
\begin{equation*}
 \Phi_{d,s} \! \left( \alpha |u(x)|^{\frac{d}{d-s}} \right)
\leq \frac{1}{2} \Phi_{d,s} \! \left( 2^{\frac{d}{d-s}} \alpha |f(x)|^{\frac{d}{d-s}} \right)
+ \frac{1}{2} \Phi_{d,s} \! \left( 2^{\frac{d}{d-s}} \alpha |g(x)|^{\frac{d}{d-s}} \right),  
\end{equation*}
where $\Phi_{d,s}$ is defined in \eqref{Defn: Phi}. Multiplying both sides by $\delta_{K}(x)^{-d}$ and using 
\begin{equation*}
    [g]_{W^{s,p}(\Omega)} \leq \frac{1}{2} \left( \frac{\alpha_{*}}{2\alpha} \right)^{\!\frac{d-s}{d}},
\end{equation*}
we obtain
\begin{align}\label{eqn113}
\Phi_{d,s} \!\left( \alpha |u(x)|^{\frac{d}{d-s}} \right) \frac{1}{\delta^{d}_{K}(x)}
&\leq \frac{1}{2} \Phi_{d,s} \!\left( 2^{\frac{d}{d-s}} \alpha |f(x)|^{\frac{d}{d-s}} \right) \frac{1}{\delta^{d}_{K}(x)} \nonumber \\
&\quad + \frac{1}{2} \Phi_{d,s} \!\left( \frac{\alpha_{*}}{2}
\left( \frac{|g(x)|}{[g]_{W^{s,p}(\Omega)}} \right)^{\frac{d}{d-s}} \right) \frac{1}{\delta^{d}_{K}(x)}.
\end{align}
Since $f \in C^{\infty}_{c}(\Omega \setminus K)$, its support is compact.  
Thus, there exist $C > 0$ and a compact set $F \subset \Omega \setminus K$ such that 
$|f(x)| \leq C\,\chi_{F}(x)$ for all $x \in \Omega$, where $\chi_{F}$ is the indicator function of $F$.  
Therefore,
\begin{align*}
\Phi_{d,s} \!\left( 2^{\frac{d}{d-s}} \alpha |f(x)|^{\frac{d}{d-s}} \right) \frac{1}{\delta^{d}_{K}(x)}
&= \sum_{n = j_{p}-1}^{\infty} \frac{\alpha^{n}}{n!} ( 2|f(x)| )^{\frac{nd}{d-s}} \frac{1}{\delta^{d}_{K}(x)} \\
&\leq \left( \sum_{n = j_{p}-1}^{\infty} \frac{\alpha^{n}}{n!} (2C)^{\frac{nd}{d-s}} \right)
\frac{\chi_{F}(x)}{\delta^{d}_{K}(x)}
\in L^{1}(\Omega).
\end{align*}
Finally, combining the above estimate with \eqref{eqn113} and applying Theorem \ref{Th : Trud-Moser inequ} to the function 
$\frac{g}{[g]_{W^{s,p}(\Omega)}} \in W^{s,p}_{0}(\Omega \setminus K)$,  
we conclude that
\begin{equation*}
\int_{\Omega} \Phi_{d,s} \!\left( \alpha |u(x)|^{\frac{d}{d-s}} \right) \frac{1}{\delta^{d}_{K}(x)} \, dx < \infty,   
\end{equation*}
which proves the corollary.
\end{proof}

Now we establish a proposition showing that the spaces $\widetilde{W}^{s,p}_{0} (\Omega \setminus K)$ and $W^{s,p}_{0} (\Omega \setminus K)$ are equivalent in the fractional Sobolev norm.

\begin{prop}\label{Proposition 1}
Let $p>1$ and $s \in (0,1)$ satisfy $sp=d$, where $d \geq 2$. Let
$u \in W^{s,p}_{0}(\Omega \setminus K)$, and let
$\widetilde{u} \in \widetilde{W}^{s,p}_{0}(\Omega \setminus K)$ be its zero extension, as defined in \eqref{Extension Function}. Then there exists a constant
$C=C(d,s,k,\Omega,K)>0$ such that
    \begin{align}
     \int_{\Omega} \int_{\Omega} \frac{|u(x)-u(y)|^{p}}{|x-y|^{2d}} \, dx \, dy \leq   \int_{\mathbb{R}^{d}} \int_{\mathbb{R}^{d}} & \frac{|\widetilde{u}(x)-\widetilde{u}(y)|^{p}}{|x-y|^{2d}} \, dx \, dy \nonumber \\ &  \leq C \int_{\Omega} \int_{\Omega} \frac{|u(x)-u(y)|^{p}}{|x-y|^{2d}} \, dx \, dy.
    \end{align}
    In particular, $\widetilde{W}^{s,p}_{0} (\Omega \setminus K) = W^{s,p}_{0} (\Omega \setminus K)$ as Banach spaces.
\end{prop}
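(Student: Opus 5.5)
The first inequality is immediate: since $\widetilde{u}=u$ on $\Omega\setminus K$ and $K$ has Lebesgue measure zero (it is locally the bi-Lipschitz image of $\{\xi_k=0\}$), the integral over $\Omega\times\Omega$ is part of the integral over $\mathbb{R}^d\times\mathbb{R}^d$. For the second inequality I would split $\mathbb{R}^d\times\mathbb{R}^d$ into $\Omega\times\Omega$, the two mixed regions $\Omega\times(\mathbb{R}^d\setminus\Omega)$ and $(\mathbb{R}^d\setminus\Omega)\times\Omega$, and the exterior region where $\widetilde{u}$ vanishes identically. It then suffices to show
\begin{equation*}
\int_{\Omega}|u(x)|^{p}\int_{\mathbb{R}^d\setminus\Omega}\frac{dy}{|x-y|^{2d}}\,dx \leq C\,[u]^{p}_{W^{s,p}(\Omega)} .
\end{equation*}
By density it is enough to treat $u\in C^{\infty}_{c}(\Omega\setminus K)$, and then pass to the limit using Fatou's lemma and the definition of $\widetilde{W}^{s,p}_{0}(\Omega\setminus K)$.

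For $x\in\Omega$, the inner integral is bounded by $\int_{|z|\geq\delta_{\Omega}(x)}|z|^{-2d}\,dz=C\,\delta_{\Omega}(x)^{-d}$. The task is therefore to control $\int_{\Omega}|u|^{p}\delta_{\Omega}^{-d}\,dx$ by $[u]^{p}_{W^{s,p}(\Omega)}$. The key point is that the singular set of $u$ is $K$, which lies at positive distance $\rho:=\operatorname{dist}(K,\partial\Omega)>0$ from $\partial\Omega$. I would split $\Omega$ into $\Omega_\rho=\{x:\delta_\Omega(x)\ge\rho/2\}$ and the boundary layer $\{\delta_\Omega<\rho/2\}$. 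On $\Omega_\rho$ the weight is bounded, so I would apply Theorem~\ref{Th: Fractional Hardy sp=k} with $\tau=p$, using that $\delta_K$ is bounded on $\Omega$:
\begin{equation*}
\int_{\Omega}|u|^{p}\,dx\leq C\int_{\Omega}\frac{|u|^{p}}{\delta_K^{d}}\,dx\leq C[u]^{p}_{W^{s,p}(\Omega)}.
\end{equation*}

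The boundary layer is the main obstacle. Here the weight is $\delta_\Omega^{-sp}$ with $sp=d>1$, so the boundary fractional Hardy inequality of \cite{dyda2004} (Theorem 1.1 (T1)) for bounded Lipschitz domains is available. That result, however, applies to $W^{s,p}_0(\Omega)$, i.e.\ functions approximable by $C_c^\infty(\Omega)$, and our $u\in C^\infty_c(\Omega\setminus K)\subset C^\infty_c(\Omega)$ qualifies. It gives directly
\begin{equation*}
\int_\Omega \frac{|u|^p}{\delta_\Omega^{d}}\,dx\le C[u]^p_{W^{s,p}(\Omega)},
\end{equation*}
with $C$ depending only on $d,s,\Omega$. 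The only thing to check carefully is that the constant in \cite{dyda2004} is independent of $u$, and in particular does not depend on the support of $u$, so that the bound survives the density argument. This step in fact makes the $\Omega_\rho$ split unnecessary, although the Hardy inequality near $K$ is still what justifies discarding $K$ in the first step.

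Combining the two pieces gives $[\widetilde u]^p_{W^{s,p}(\mathbb{R}^d)}\le (1+2C)[u]^p_{W^{s,p}(\Omega)}$. The $L^p$ parts of the norms agree, and the $L^p$ norm is itself controlled by the seminorm as above. Hence the map $u\mapsto\widetilde u$ is an isomorphism between $W^{s,p}_0(\Omega\setminus K)$ and $\widetilde W^{s,p}_0(\Omega\setminus K)$ with equivalent norms, which is the claimed identification of Banach spaces.
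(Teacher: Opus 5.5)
Your proof is correct, and it takes a leaner route than the paper's. The paper bounds the inner integral using $\delta_x=\min\{\delta_\Omega(x),\delta_K(x)\}$. It splits $\int_{\delta_x}^\infty r^{-1-d}\,dr$ into a $\delta_\Omega^{-d}$ term and a $\delta_K^{-d}$ term, and then invokes both Dyda's boundary Hardy inequality (T1) and Theorem~\ref{Th: Fractional Hardy sp=k}.

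You observe that $\mathbb{R}^d\setminus\Omega$ already lies outside $B_{\delta_\Omega(x)}(x)$. So the tail is bounded by $C\delta_\Omega(x)^{-d}$ alone, and Dyda's inequality with $sp=d>1$ finishes the argument. The $\delta_K$ term in the paper is therefore redundant; it only weakens the bound, since $\delta_x^{-d}\ge\delta_\Omega^{-d}$.

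Your route gives more. The constant in the seminorm comparison depends only on $d,s,\Omega$, not on $k$ or $K$. The equivalence also holds for any Lebesgue-null compact $K\subset\Omega$, and indeed on all of $W^{s,p}_0(\Omega)$. The paper's route gains nothing mathematically beyond tying the proposition to its main Hardy inequality.

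Two small corrections. First, your closing remark that the Hardy inequality near $K$ ``justifies discarding $K$'' is not right. Discarding $K$ uses only that $K$ is a null set, which you had already established via the bi-Lipschitz charts. Second, the $L^p$ control needed for the norm equivalence can also come from Dyda's inequality, because $\delta_\Omega$ is bounded on $\Omega$ (as in the proof of Lemma~\ref{testfunc}). So Theorem~\ref{Th: Fractional Hardy sp=k} is not needed anywhere in this proposition, and the $\Omega_\rho$ split you first considered can be dropped entirely, as you noticed.
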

\begin{proof}
Let $u \in W^{s,p}_{0}(\Omega \setminus K)$, and let
$\widetilde{u} \in \widetilde{W}^{s,p}_{0}(\Omega \setminus K)$ be its zero extension, as defined in \eqref{Extension Function}. Since
$\Omega \times \Omega \subset \mathbb{R}^{d} \times \mathbb{R}^{d}$, we have
\begin{equation*}
    \int_{\Omega} \int_{\Omega}
    \frac{|u(x)-u(y)|^{p}}{|x-y|^{2d}}
    \, dx \, dy
    \leq
    \int_{\mathbb{R}^{d}} \int_{\mathbb{R}^{d}}
    \frac{|\widetilde{u}(x)-\widetilde{u}(y)|^{p}}{|x-y|^{2d}}
    \, dx \, dy.
\end{equation*}
This proves one direction of the inequality. To prove the reverse inequality, note that $\widetilde{u}\equiv 0$ on $\mathbb{R}^{d}\setminus\Omega$. Therefore,
\begin{align}\label{eqn1 prop}
   \int_{\mathbb{R}^{d}} \int_{\mathbb{R}^{d}} \frac{|\widetilde{u}(x)-\widetilde{u}(y)|^{p}}{|x-y|^{2d}} \, dx \, dy
   &=  \int_{\Omega} \int_{\Omega} \frac{|u(x)-u(y)|^{p}}{|x-y|^{2d}} \, dx \, dy  \nonumber \\
   &\quad + 2 \int_{\Omega} |u(x)|^{p} \int_{\mathbb{R}^{d} \setminus \Omega} \frac{1}{|x-y|^{2d}} \, dy \, dx.
\end{align}
For any $x \in \Omega$, let $\delta_{x} :=  \min \{ \delta_{\Omega} (x) , \delta_{K}(x) \}$, where $\delta_{\Omega}(x)$ denotes the distance from $x$ to $\mathbb{R}^d \setminus \Omega$, and $\delta_{K}(x)$ the distance from $x$ to $K$. Then 
\begin{equation*}
    \mathbb{R}^{d} \setminus \Omega \subset \mathbb{R}^{d} \setminus B_{\delta_{x}}(x), \quad \forall \ x \in \Omega,
\end{equation*}
where $B_{\delta_{x}}(x)$ is the ball of radius $\delta_x > 0$ centered at $x$ in $\mathbb{R}^{d}$. Therefore,
\begin{align*}
    \int_{\Omega} |u(x)|^{p}  \int_{\mathbb{R}^{d} \setminus \Omega} \frac{1}{|x-y|^{2d}} \, dy \, dx & \leq C \int_{\Omega} |u(x)|^{p} \left( \int_{\delta_{x}}^{\infty} r^{-1-d} \, dr \right) \, dx \\ & \leq C \int_{\Omega} |u(x)|^{p} \left(  \int_{\delta_{\Omega}(x)}^{\infty} r^{-1-d} \, dr \right) \, dx \\ & \hspace{4mm} + C \int_{\Omega} |u(x)|^{p} \left(  \int_{\delta_{K}(x)}^{\infty} r^{-1-d} \, dr \right) \, dx  \\ & \leq C \left( \int_{\Omega} \frac{|u(x)|^{p}}{\delta^{d}_{\Omega}(x)} \, dx + \int_{\Omega} \frac{|u(x)|^{p}}{\delta^{d}_{K}(x)} \, dx \right).
\end{align*}
By applying \cite[Theorem $1.1$]{dyda2004} to the first integral on the right-hand side, and Theorem \ref{Th: Fractional Hardy sp=k} from this article to the second term on the right-hand side, we obtain
\begin{equation}\label{eqn2 prop}
    \int_{\Omega} |u(x)|^{p}  \int_{\mathbb{R}^{d} \setminus \Omega} \frac{1}{|x-y|^{2d}} \, dy \, dx
    \leq
    C \int_{\Omega} \int_{\Omega} \frac{|u(x)-u(y)|^{p}}{|x-y|^{2d}} \, dx \, dy,
\end{equation}
where $C = C(d,s,k,\Omega,K) > 0$.  
Combining the inequalities \eqref{eqn1 prop} and \eqref{eqn2 prop} completes the proof.
\end{proof}


\subsection{Existence of \texorpdfstring{$\alpha^{*}_{s,d}$}{alpha star} in Theorem  \ref{Th : Trud-Moser inequ}}\label{subsec2}  To prove the sharpness of $\alpha^{*}_{s,d}$ in Theorem \ref{Th : Trud-Moser inequ}, we consider the following sequence of Moser-type functions. For $\epsilon > 0$, define $u_{\epsilon} : \mathbb{R}^{d} \to \mathbb{R}$ by 
\begin{equation}\label{moser-type fn}
   u_{\epsilon} (x) = \begin{dcases}
        |\ln \epsilon|^{\frac{d-s}{d}} , & |x| \leq \epsilon \\ 
       \frac{|\ln|x||}{|\ln \epsilon|^{\frac{s}{d}}}, & \epsilon < |x|<1  \\
       0, & |x| \geq 1.
    \end{dcases}
\end{equation}
A result by Parini and Ruf \cite[Proposition 5.1]{ruf2019} shows that for $d \geq 2$, the fractional seminorm of $u_{\epsilon}$ satisfies
\begin{equation}\label{ruf-limit}
\lim_{\epsilon \to 0} [u_{\epsilon}]^{p}_{W^{s,p}(\mathbb{R}^{d})} =  \frac{2 (d \omega_{d})^{2} \Gamma (p+1) }{d!}  \sum_{n=0}^{\infty} \frac{(d+n-1)!}{n!} \frac{1}{(d+2n)^{p}} ,
\end{equation}
where $\omega_{d}$ denotes the volume of the unit ball in $\mathbb{R}^{d}$.
Let us assume that $0 \in \Omega \setminus K$. Then there exists $r > 0$ such that $B_{r}(0) \subset \Omega \setminus K$. With proper scaling, we can show that $u_{\epsilon} \in \widetilde{W}^{s,p}_{0}(\Omega \setminus K)$. Therefore, we may further assume that $B_{1}(0) \subset \Omega \setminus K$. Let $\alpha > \alpha^{*}_{s,d}$. Using the definition of $\alpha^{*}_{s,d}$ in \eqref{Defn alpha star d}, the assumption
$\alpha>\alpha^{*}_{s,d}$, and \eqref{ruf-limit}, we obtain
\begin{equation*}
\lim_{\epsilon\to0}
\alpha [u_{\epsilon}]^{-\frac{d}{d-s}}_{W^{s,p}(\mathbb{R}^{d})}
=
\frac{\alpha}{\alpha^{*}_{s,d}}\,d
>d.
\end{equation*}
Hence, we may fix a constant $\beta$ such that $d<\beta<\frac{\alpha}{\alpha^{*}_{s,d}}\,d$. Moreover, for every $x\in B_{\epsilon}(0)$,
\[
u_{\epsilon}(x)=|\ln\epsilon|^{\frac{d-s}{d}},
\]
and hence
\begin{equation*}
|u_{\epsilon}(x)|^{\frac{d}{d-s}}
=
|\ln\epsilon|
\longrightarrow\infty
\qquad\text{as }\epsilon\to0.
\end{equation*}
Therefore, using the fact that $\Phi_{d,s}(t)\geq \frac12\exp(t)$ for all sufficiently large $t>0$, we may choose $\epsilon>0$ sufficiently small so that
\begin{equation}\label{beta-choice}
\alpha [u_{\epsilon}]^{-\frac{d}{d-s}}_{W^{s,p}(\mathbb{R}^{d})}
\geq
\beta>d,
\end{equation}
and
\begin{equation}\label{phi-exp}
\Phi_{d,s}\left(\alpha |u_{\epsilon}(x)|^{\frac{d}{d-s}}\right)
\geq
\frac12
\exp\left(\alpha |u_{\epsilon}(x)|^{\frac{d}{d-s}}\right),
\qquad
\forall\,x\in B_{\epsilon}(0).
\end{equation}
Recall that $\delta_{K}(x)$ denotes the distance from $x$ to the set $K$.
Since $\delta_{K}^{-d}(x) \geq C_{0} > 0$ for all $x \in \Omega$, we obtain
\begin{align*}
\int_{\Omega} \Phi_{d,s} \left( \alpha \left( \frac{|u_{\epsilon}(x)|}{[u_{\epsilon}]_{W^{s,p}(\mathbb{R}^{d})}} \right)^{\frac{d}{d-s}} \right) \frac{dx}{\delta^{d}_{K}(x)} 
 \geq C_{0} \int_{\Omega} \Phi_{d,s} \left( \alpha \left( \frac{|u_{\epsilon}(x)|}{[u_{\epsilon}]_{W^{s,p}(\mathbb{R}^{d})}} \right)^{\frac{d}{d-s}} \right) dx.
\end{align*}
Applying \eqref{phi-exp}, we further have
\begin{align*}
    \int_{\Omega} \Phi_{d,s} \left( \alpha \left( \frac{|u_{\epsilon}(x)|}{[u_{\epsilon}]_{W^{s,p}(\mathbb{R}^{d})}} \right)^{\frac{d}{d-s}}  \right) \frac{dx}{\delta^{d}_{K}(x)} & \geq C_{0} \int_{\Omega} \Phi_{d,s} \left( \alpha \left( \frac{|u_{\epsilon}(x)|}{[u_{\epsilon}]_{W^{s,p}(\mathbb{R}^{d})}} \right)^{\frac{d}{d-s}}  \right) dx  \\ &  \geq  \frac{C_{0}}{2} \int_{B_{\epsilon}(0)} \exp \left( \left( \alpha \left( \frac{|u_{\epsilon}(x)|}{[u_{\epsilon}]_{W^{s,p}(\mathbb{R}^{d})}} \right)^{\frac{d}{d-s}}  \right)  \right)\, dx .
\end{align*}
Using the estimate \eqref{beta-choice}, we obtain
\begin{align*}
    \int_{\Omega} \Phi_{d,s} \left( \alpha \left( \frac{|u_{\epsilon}(x)|}{[u_{\epsilon}]_{W^{s,p}(\mathbb{R}^{d})}} \right)^{\frac{d}{d-s}}  \right) \frac{dx}{\delta^{d}_{K}(x)} & \geq \frac{C_{0}}{2} \exp \left( - \beta \ln \epsilon \right) \int_{B_{\epsilon}(0)} dx \\ & = C \epsilon^{d- \beta}.
\end{align*}
Since $\beta > d$, the factor $\epsilon^{d-\beta} \to \infty$ as $\epsilon \to 0$.
Therefore, 
\begin{align}
   \sup_{u \in \widetilde{W}^{s,p}_{0}(\Omega \setminus K), \ [u]_{W^{s,p}(\mathbb{R}^{d})} \leq 1}  \int_{\Omega} \Phi_{d,s} \left( \alpha  |u(x)|^{\frac{d}{d-s}}  \right) \, \frac{dx}{\delta^{d}_{K}(x)}  &   = \infty , \quad   \forall \ \alpha \in (\alpha^{*}_{s,d}, \infty).
\end{align}
This completes the proof of Theorem \ref{Th : Trud-Moser inequ}.

\bigskip

\textbf{Acknowledgement:} The author gratefully acknowledges the financial support of the Anusandhan National Research Foundation (ANRF) through the National Post Doctoral Fellowship (PDF/2025/004611). The author also thanks the Theoretical Statistics and Mathematics Unit, Indian Statistical Institute, Delhi Centre, India, for providing a supportive and stimulating research environment.


\end{document}